\numberwithin{equation}{section}
\newtheorem{theorem}{Theorem}[section]
\newtheorem{lemma}[theorem]{Lemma}
\newtheorem{proposition}[theorem]{Proposition}
\theoremstyle{remark}
\newtheorem{remark}[theorem]{Remark}
\newcommand{\dv}{\operatorname{div}}
\newcommand{\R}{\operatorname{Re}}
\newcommand{\RR}{\mathbb{R}}
\newcommand{\CC}{\mathbb{C}}
\newcommand{\ep}{\varepsilon}
\newcommand{\ol}{\overline}
\newcommand{\calG}{\mathcal{G}}
\newcommand{\calI}{\mathcal{I}}
\definecolor{gr}{rgb}   {0.,   0.8,   0. }
\definecolor{bl}{rgb}   {0.,   0.5,   1. }
\definecolor{mg}{rgb}   {0.7,  0.,    0.7}
\def\Xint#1{\mathchoice
   {\XXint\displaystyle\textstyle{#1}}%
   {\XXint\textstyle\scriptstyle{#1}}%
   {\XXint\scriptstyle\scriptscriptstyle{#1}}%
   {\XXint\scriptscriptstyle\scriptscriptstyle{#1}}%
   \!\int}
\def\XXint#1#2#3{{\setbox0=\hbox{$#1{#2#3}{\int}$}
     \vcenter{\hbox{$#2#3$}}\kern-.5\wd0}}
\def\aver#1{\Xint-_{#1}}
\DeclareMathOperator*{\essinf}{ess\,inf}
\begin{document}

\allowdisplaybreaks

\title{Vertical versus conical square functions
}

\author{Pascal Auscher}

\address{Pascal Auscher
\\
Univ.~Paris-Sud, laboratoire de Math\'ematiques, UMR 8628, Orsay F-91405; CNRS, Orsay, F-91405}

\email{pascal.auscher@math.u-psud.fr}

\author{Steve Hofmann}

\address{Steve Hofmann
\\
Department of Mathematics,
University of Missouri, Columbia, Missouri 65211, USA}

\email{hofmann@math.missouri.edu}

\author{Jos\'e-Mar{\'\i}a Martell}

\address{Jos\'e Mar{\'\i}a Martell
\\
Instituto de Ciencias Matem\'aticas CSIC-UAM-UC3M-UCM
\\
Consejo Superior de Investigaciones Cient{\'\i}ficas
\\
C/ Nicol\'as Cabrera, 13-15
\\
E-28049 Madrid, Spain} 
\email{chema.martell@icmat.es}

\thanks{Part of this work was carried out while the first author was visiting the Centre for Mathematics and its Applications, Australian National University, Canberra ACT 0200, Australia. The second author was partially supported by NSF grant number DMS 0801079. The third author was supported by MEC Grant MTM2007-60952.}

\date{\today}

\begin{abstract}
We study the difference between vertical and conical square functions in the abstract  and also  in the specific case where the square functions come from an elliptic operator.
\end{abstract}

\subjclass[2010]{42B25}

\keywords{Vertical square functions, conical square functions, extrapolation, elliptic operators}

\maketitle

\section{Introduction}

The purpose of this article is to draw attention to differences between vertical and conical square functions. By vertical square functions, we mean the usual Littlewood-Paley-Stein functionals. By conical square functions, we mean the area functionals of Lusin type. Our interest in this subject was triggered by the recent  unpublished work of Dragi\v{c}evi\'c and Volberg \cite{DV}. Let us first describe what they proved.

Let $A=A(x)$ be an $n\times n$ matrix of complex, $L^\infty$
coefficients,  defined on $\mathbb{R}^n$, and satisfying the
\begin{equation*}
\lambda |\xi |^2\leq \R  A
\xi \cdot
\overline\xi
\ \textrm{and}\ | A \xi \cdot \overline \zeta
| \leq \Lambda |\xi ||\zeta|,\end{equation*} for $\xi,\zeta \in
\mathbb{C}^n$ and for some $\lambda ,\Lambda$ such that $0<\lambda \leq \Lambda
<\infty$.
 We define a second order divergence form operator
\begin{equation*}
Lf\equiv -\dv (A\nabla f),\end{equation*} which we interpret in the sense of maximal
accretive operators via a sesquilinear form.

\begin{proposition}[\cite{DV}]\label{propDV} 
If $A$ is real and $1<p<\infty$, there is a dimension free bilinear estimate
 \begin{equation}\label{eqDV}\iint_{\RR^{n+1}_+}  |\nabla_{y} e^{-tL}f(y)|  |\nabla_{y} e^{-tL}g(y)|dydt \le C(p, \lambda, \Lambda) \|f\|_p\|g\|_{p'}.
 \end{equation}
  Here $p'$ is the conjugate exponent to $p$. \end{proposition}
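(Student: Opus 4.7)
The plan is to reduce the bilinear estimate \eqref{eqDV} to an $L^p$ bound on the vertical Littlewood--Paley--Stein square function, and then establish that bound with a constant depending only on $p,\lambda,\Lambda$. For the reduction, I apply the Cauchy--Schwarz inequality to the $t$-integral pointwise in $y$,
\begin{equation*}
\int_0^\infty |\nabla_y e^{-tL}f(y)|\,|\nabla_y e^{-tL}g(y)|\,dt \;\le\; G_L f(y)\,G_L g(y),
\end{equation*}
where $G_L h(y):=\bigl(\int_0^\infty |\nabla_y e^{-tL}h(y)|^2\,dt\bigr)^{1/2}$, and then integrate in $y$ and apply H\"older with exponents $p$ and $p'$. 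This reduces \eqref{eqDV} to the uniform bound $\|G_L h\|_p \le C(p,\lambda,\Lambda)\|h\|_p$ for $1<p<\infty$. For $p=2$ the bound is immediate from the energy identity, since $\partial_t \|e^{-tL}h\|_2^2=-2\,\R\int A\nabla e^{-tL}h\cdot\overline{\nabla e^{-tL}h}\,dy$ integrates to give $\|G_L h\|_2^2\le (2\lambda)^{-1}\|h\|_2^2$.

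For $p\ne 2$, the classical route uses the Gaussian pointwise bounds for $e^{-tL}$ and $t^{1/2}\nabla e^{-tL}$ available when $A$ is real (via De Giorgi--Nash--Moser regularity), combined with Calder\'on--Zygmund and Littlewood--Paley theory. This yields $L^p$ boundedness of $G_L$, but with a constant that a priori depends on the ambient dimension $n$. The substance of Proposition \ref{propDV} is the removal of this dimensional dependence, and here I would follow the Bellman-function strategy of Dragi\v{c}evi\'c--Volberg: one constructs a function $B:\RR\times\RR\to\RR_+$ satisfying a size estimate $0\le B(\zeta,\eta)\le C(|\zeta|^p+|\eta|^{p'})$ together with a pointwise Hessian inequality strong enough that, when tested on the pair $(u,v):=(e^{-tL}f,e^{-tL}g)$, an application of the chain rule, of $\partial_t u=\dv(A\nabla u)$, of one integration by parts in $y$, and of the ellipticity $\R(A\xi\cdot\bar\xi)\ge\lambda|\xi|^2$ yields
\begin{equation*}
-\partial_t B(u,v) \;\ge\; c(p,\lambda,\Lambda)\,|\nabla u|\,|\nabla v| \;+\; (\text{divergence in } y).
\end{equation*}
Integrating over $\RR^{n+1}_+$ collapses the divergence, turns the Hessian term into the left-hand side of \eqref{eqDV}, and controls the boundary contribution at $t=0$ by $\|f\|_p^p+\|g\|_{p'}^{p'}$; a standard homogeneity/polarization step upgrades this to the stated bilinear bound.

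The principal obstacle is the Bellman construction itself. The pointwise matrix inequality demanded of $B$ is only compatible with the sesquilinear form $\R(A\xi\cdot\bar\zeta)$ when $A$ is real, which is precisely the hypothesis of the proposition; adjusting the convexity of $B$ (in the spirit of Burkholder's martingale functions) so that the Hessian lower bound produces a clean $|\nabla u|\,|\nabla v|$ with a constant free of $n$ is where the depth of the argument lies. The reduction to $G_L$ via Cauchy--Schwarz and H\"older is routine; the dimension-free vertical square function bound is the heart of the matter.
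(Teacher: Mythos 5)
The paper does not prove Proposition \ref{propDV}; it is quoted from the unpublished work of Dragi\v{c}evi\'c and Volberg, and the introduction uses it precisely as a foil. Your opening reduction is exactly the one the paper rules out. Applying Cauchy--Schwarz in $t$ and H\"older in $y$ reduces \eqref{eqDV} to $\|G_L f\|_p\,\|G_L g\|_{p'}\lesssim\|f\|_p\,\|g\|_{p'}$, and you then assert that $\|G_Lh\|_p\le C(p,\lambda,\Lambda)\|h\|_p$ holds for all $1<p<\infty$. This is false: by Proposition \ref{propA} the vertical square function estimate \eqref{eq:verticalh} holds only for $1<p<q_+(L)$ and \emph{fails} for $p>q_+(L)$, and for real coefficients in dimension $n\ge 3$ one can only guarantee $q_+(L)>2$, with $q_+(L)$ arbitrarily close to $2$ in examples. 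The introduction states this obstruction explicitly (``one cannot deduce the bilinear estimate in Proposition \ref{propDV} from the vertical square function estimate''). Your proposed route to the $L^p$ bound for $p\ne 2$ also does not exist: De Giorgi--Nash--Moser gives Gaussian bounds for the kernel of $e^{-tL}$, but not for its spatial gradient, which is exactly why $q_+(L)$ can be finite; there is no Calder\'on--Zygmund kernel to run the classical argument on. The only part of your reduction that survives is the $p=2$ energy identity and the range $q_+(L)'<p<q_+(L)$.

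The second half of your proposal (the Bellman-function bilinear embedding) is indeed the strategy of the cited work, but you have not carried it out: you defer the construction of $B$ and its Hessian inequality, which you yourself identify as ``where the depth of the argument lies.'' As written, the proposal therefore consists of a reduction that provably fails for general $p$, plus a correct but unexecuted pointer to the actual proof. The framing is also off: the Bellman function is not there to strip a dimensional constant from an otherwise valid vertical square function bound; it is needed because the bilinear quantity in \eqref{eqDV} is genuinely weaker than the product $\|G_Lf\|_p\|G_Lg\|_{p'}$, one factor of which can be infinite. If you only want \eqref{eqDV} with a dimension-dependent constant, the route indicated in the paper is the averaging trick of Coifman--Meyer--Stein, which bounds the left-hand side by $c_n\|S_hF\|_p\|S_hG\|_{p'}$ with \emph{conical} square functions, combined with Proposition \ref{prop:Conical-real}, valid for all $1<p<\infty$ when $A$ is real.
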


Set aside the dimension free bound, this  result is striking
in view of the following vertical square function estimate.

\begin{proposition}[\cite{A}]\label{propA} If $A$ is real and $1<p<q_{+}(L)$,
 \begin{equation}
\label{eq:verticalh}
\int_{\RR^{n}}\left(\int_{0}^\infty  |\nabla_{y} e^{-tL}f(y)|^2  dt\right)^{p/2}\, dy \le C(p, n, \lambda, \Lambda)^p \|f\|_p^p.
\end{equation}
 Furthermore, this estimate fails for $p>q_{+}(L)$ (if $q_{+}(L)<\infty$).
\end{proposition}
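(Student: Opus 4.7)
My plan is to split the positive direction by the location of $p$ and to obtain the sharpness contrapositively. Write $Gf(y) := \big(\int_0^\infty|\nabla e^{-tL}f(y)|^2\,dt\big)^{1/2}$ so that \eqref{eq:verticalh} reads $\|Gf\|_p^p \le C\|f\|_p^p$.

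\emph{Positive direction.} The base case $p=2$ is immediate from ellipticity and the identity $-\tfrac{1}{2}\tfrac{d}{dt}\|e^{-tL}f\|_2^2 = \R\langle Le^{-tL}f,e^{-tL}f\rangle$:
\[
\|Gf\|_2^2 = \iint_{\RR^{n+1}_+}|\nabla e^{-tL}f|^2\,dy\,dt \le \lambda^{-1}\R\int_0^\infty\langle Le^{-tL}f,e^{-tL}f\rangle\,dt = \frac{1}{2\lambda}\|f\|_2^2.
\]
The extrapolation off $L^2$ proceeds by viewing $G$ as a Hilbert-space-valued sublinear operator $f \mapsto (\nabla e^{-tL}f)_t \in L^2(0,\infty;dt)$. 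For $1 < p \le 2$, the $L^2$ Gaffney--Davies off-diagonal decay of $\sqrt{t}\nabla e^{-tL}$ (automatic from analyticity of the semigroup via Davies' perturbation method) combined with the vector-valued Blunck--Kunstmann type extrapolation theorem adapted to semigroups in \cite{A} produces an $L$-adapted weak-$(1,1)$ estimate, and hence $L^p$-boundedness of $G$ by interpolation. For $2 < p < q_+(L)$, the definition of $q_+(L)$ delivers $L^2$-$L^p$ off-diagonal bounds for $\sqrt{t}\nabla e^{-tL}$ throughout the range, which feed into the upper-endpoint extrapolation theorem of \cite{A}---a Shen-style sharp maximal function / good-$\lambda$ argument---to upgrade the $L^2$ bound to $L^p$.

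\emph{Sharpness.} I would argue contrapositively that \eqref{eq:verticalh} on $L^p$ forces $\sup_{t>0}\|\sqrt{t}\nabla e^{-tL}\|_{p\to p}<\infty$, contradicting $p > q_+(L)$. Fix a bump $\varphi\in C_c^\infty(1/2,2)$ with $\int\varphi=1$, set $\varphi_{t_0}(t)=t_0^{-1}\varphi(t/t_0)$, and apply Cauchy--Schwarz in $t$:
\[
\left\|\int_0^\infty\varphi_{t_0}(t)\nabla e^{-tL}f\,dt\right\|_p \le \|\varphi_{t_0}\|_{L^2(dt)}\,\|Gf\|_p \le C\,t_0^{-1/2}\|f\|_p.
\]
The inner integral equals $\nabla m_{t_0}(L)f$ with $m_{t_0}(\zeta)=\int\varphi_{t_0}(t)e^{-t\zeta}\,dt$ a holomorphic approximation of $e^{-t_0\zeta}$ on the accretivity sector. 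The holomorphic functional calculus of $L$ removes the averaging and yields $\|\sqrt{t_0}\nabla e^{-t_0L}\|_{p\to p}\le C$ uniformly in $t_0$, as required.

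The main obstacle is the upper extrapolation $2 < p < q_+(L)$: no classical weak-$(1,1)$ endpoint is available here, so one must exploit $L^2$-$L^p$ (rather than merely $L^p$-$L^p$) off-diagonal decay of $\sqrt{t}\nabla e^{-tL}$ in a genuinely Hilbert-space-valued setting, and packaging $G$ to fit the Shen-type sharp maximal function framework is the technical heart of \cite{A}. In the sharpness direction the delicate point is removing the $t$-averaging via the holomorphic calculus without losing the $t_0$-uniformity; a cleaner alternative is to invoke the standard equivalence between vertical square function $L^p$-bounds and Riesz transform bounds, together with the known failure of $\nabla L^{-1/2}$ on $L^p$ for $p > q_+(L)$.
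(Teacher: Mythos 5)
First, note that the paper does not actually prove Proposition \ref{propA}: it is quoted from \cite{A}, and the corresponding part of Theorem \ref{newprop} is likewise dispatched with ``this was treated in \cite{A}''. So the comparison is with the proof in that memoir, and your positive direction reproduces its architecture faithfully: the $L^2$ bound by ellipticity and integration by parts; the range $1<p<2$ (recall $p_-(L)=q_-(L)=1$ for real $A$) via the Blunck--Kunstmann/Calder\'on--Zygmund-without-kernels weak-type criterion fed with Gaffney--Davies estimates; and $2<p<q_+(L)$ via the good-$\lambda$/sharp maximal function criterion fed with $L^2$--$L^q$ off-diagonal bounds for $\sqrt{t}\nabla e^{-tL}$ with $q<q_+(L)$. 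As an outline this is correct; all the substance sits inside the two extrapolation theorems of \cite{A} that you invoke, which is unavoidable here.

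The one step that does not close as written is the de-averaging in your sharpness argument. You need $\zeta\mapsto e^{-t_0\zeta}/m_{t_0}(\zeta)$ to belong to $H^\infty$ of a sector containing the spectrum, uniformly in $t_0$; by scaling this amounts to a lower bound $|m_1(w)|\gtrsim e^{-\Re w}$ on $\Sigma_\mu$, where $m_1(w)=\int_{1/2}^{2}\varphi(s)e^{-sw}\,ds$. Nothing forces this: $m_1$ is an entire function that may well vanish at points of the sector, and even away from zeros it decays like $e^{-\Re w/2}|w|^{-N}$ for every $N$ (since $\varphi$ vanishes to infinite order at $s=1/2$), so the ratio is only controlled for $|w|$ large, not on compact subsets of the sector. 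A clean fix is to average against $\varphi_{t_0}(s)=t_0^{-1}e^{-s/t_0}$ on $(0,\infty)$ instead: Cauchy--Schwarz in $s$ still produces the factor $C\,t_0^{-1/2}$, and the averaged operator is exactly $\nabla (I+t_0L)^{-1}$, so \eqref{eq:verticalh} yields $\sup_{t>0}\|\sqrt{t}\,\nabla(I+tL)^{-1}\|_{p\to p}<\infty$ with no inversion of $m_{t_0}(L)$ required; by \cite{A} the interior of the set of such $p$ is again $(q_-(L),q_+(L))$, which gives the contradiction. Your fallback through the Riesz transform is essentially the route of \cite{A}, but it is not a free ``standard equivalence'': one must run the duality/Calder\'on reproducing formula argument, which in addition requires the $L^{p'}$ bound for the dual vertical square function of $L^*$ (available here because $A^t$ is real).
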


The number $q_{+}(L)$ has been introduced in \cite{A}, as well as the three other numbers  $p_{-}(L)$, $q_{-}(L)$ and $ p_{+}(L)$  as limits of the following intervals.
The interval   $(p_-(L),p_+(L))$ is the maximal open interval where the heat semigroup $\{e^{-tL}\}_{t>0}$ is uniformly bounded on $L^p$ or equivalently the semigroup satisfies $L^p-L^q$ off-diagonal estimates  when $p_{-}(L)<p\le q <p_{+}(L)$  , see \eqref{off-heat} below. Analogously, $(q_-(L),q_+(L))$ is the maximal open interval where $\{\sqrt{t}\nabla e^{-tL}\}_{t>0}$ is uniformly bounded on $L^p$ or satisfies $L^p-L^q$ off-diagonal estimates when $q_{-}(L)<p\le q <q_{+}(L)$  .  These intervals also determine up to endpoints the range of $L^p$ boundedness  of the functional calculus, Riesz transform and  vertical square functions. One has $p_-(L)=q_-(L)$, $(q_+(L))^*\le p_+(L)$ ---where $q^*=qn/(n-q)$ when $q<n$ and $q^*=\infty$ otherwise. Also, $p_-(L)=q_-(L)=1$ and $p_+(L)=q_{+}(L)=\infty$ if $n=1$.  For  $n=2$, or for $n\ge 3$ and  $L$ with real coefficients, the same is true except that one can only say that $q_+(L)>2$ for $n\ge 2$ and this is sharp. Additionally, if $n\ge 3$ and $A$ with complex coefficients, then $p_-(L)<2n/(n+2)$ and $p_+(L)>2n/(n-2)$.  See \cite{A} for full details. Also \cite{HMMc}   proves the latter inequalities to be sharp using an example of Freshe \cite{Fr}.

 Since $q_+(L)$ can be arbitrary close to 2, one cannot deduce the bilinear estimate in Proposition \ref{propDV} from the vertical square function estimate. So the bilinear estimate seems to exhibit some special feature that the vertical square function does not have. Indeed,   bilinear integrals as above can also be estimated using conical square functions thanks to an averaging trick that appears
in \cite{FS} and \cite{CMS}:
$$ \iint_{\RR^{n+1}_{+}}F(y,t) G(y,t) dydt  = b_{n}^{-1}\int_{\RR^n} \left( \iint_{\RR^{n+1}_{+}} F(y,t) G(y,t) h\Big(\frac{x-y}{t^{1/2}}\Big)\, \frac{dydt}{t^{n/2}}\right) dx
$$
with $h$ the indicator function of the unit ball and $b_{n}$ its volume, so that
$$
\left | \iint_{\RR^{n+1}_{+}} F(y,t) G(y,t) dydt \right| \le c_{n} \|S_{h}F\|_{p}\|S_{h}G\|_{p'}$$
with
$$
S_{h}F(x)= \left(\iint_{|x-y| <  \sqrt t}  |F(y,t)|^2 \frac
{dydt}{{t }^{n/2}}\right)^{1/2}.
$$
Hence, applying this to $F(y,t)=\nabla_{y} e^{-tL}f(y)$ and $G(y,t)=\nabla_{y} e^{-tL}g(y)$ it becomes natural to expect the corresponding conical square function estimate  holds in a larger range of $p$ than the one for \eqref{eq:verticalh}. Indeed, we shall show as part of  Theorem \ref{newprop}.
\begin{proposition}\label{prop:Conical-real}
If $A$ is real and $1<p<\infty$,
\begin{equation}
\label{eq:conicalh}
\int_{\RR^n} \left(\iint_{|x-y| <  \sqrt t}  |\nabla_{y} e^{-tL}f(y)|^2 \frac
{dydt}{{t }^{n/2}}\right)^{p/2} \,dx \le C(p, n,\lambda,\Lambda)^p\|f\|_{p}^p.
\end{equation}
\end{proposition}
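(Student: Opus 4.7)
The strategy splits into $p=2$, $1<p<2$, and $2<p<\infty$. For $p=2$, Fubini gives
$$\|S_L f\|_2^2 \;=\; b_n\iint_{\RR^{n+1}_+}|\nabla e^{-tL}f(y)|^2\,dy\,dt,$$
and accretivity (integrating $\tfrac{d}{dt}\|e^{-tL}f\|_2^2 \le -2\lambda\|\nabla e^{-tL}f\|_2^2$ over $t\in(0,\infty)$) bounds this by $\tfrac{b_n}{2\lambda}\|f\|_2^2$.

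For $1<p<2$, I would invoke the $L$-adapted \CZ extrapolation of \cite{A}. Since $\sqrt{t}\nabla e^{-tL}(I-e^{-sL})^M$ satisfies $L^{p_0}$-$L^2$ off-diagonal estimates for every $p_0\in(q_-(L),q_+(L))=(1,q_+(L))$ in the real case, a \CZ decomposition $f=g+\sum_Q b_Q$ at height $\alpha$, with bad parts replaced by $(I-e^{-t_QL})^M b_Q$ (where $t_Q=\ell(Q)^2$), produces a weak $(p_0,p_0)$ bound for $S_L$. Marcinkiewicz interpolation with the $L^2$ estimate then gives the strong $L^p$ inequality for $p\in(p_0,2]$; letting $p_0\to 1^+$ covers $1<p\le 2$.

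The main obstacle is $2<p<\infty$, since $q_+(L)$ may be arbitrarily close to $2$ when $n\ge 3$, which makes off-diagonal estimates for $\sqrt{t}\nabla e^{-tL}$ useless beyond $q_+(L)$. The decisive additional input is the parabolic Caccioppoli inequality for $u(y,t)=e^{-tL}f(y)$: on any cylinder $B\times I$ with $|I|\sim r(B)^2$,
$$\iint_{B\times I}|\nabla u|^2\,dy\,dt \;\le\; \frac{C}{r(B)^2}\iint_{2B\times 2I}|u|^2\,dy\,dt.$$
Applied inside the cone $|x-y|<\sqrt{t}$, this trades the gradient for a factor of $t^{-1}$ compensated by the $t$-integration, reducing matters to conical functionals built from $e^{-tL}$ itself. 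These are controllable on every $L^q$, $1<q\le\infty$, because $e^{-tL}$ satisfies Gaussian pointwise bounds in the real case (so $p_+(L)=\infty$). Combining this with an $L$-adapted Fefferman--Stein sharp-maximal-function argument then produces $\|S_Lf\|_p\lesssim\|f\|_p$ for every $2<p<\infty$. This interplay of conical averaging and Caccioppoli is precisely the feature the vertical square function lacks, explaining the contrast with Proposition \ref{propA}.
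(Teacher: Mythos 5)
Your $p=2$ computation and your $1<p<2$ sketch (\CZ decomposition with the bad parts replaced by $(I-e^{-\ell(Q)^2L})^Mb_Q$, then Marcinkiewicz interpolation) are sound and match how the range $p_-(L)<p<2$ is obtained in \cite{A}; the paper simply cites \cite[Corollary 6.10]{A} for $p\le 2$.

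The case $2<p<\infty$, however, contains a genuine gap. The parabolic Caccioppoli inequality on Whitney cylinders inside the cone reduces the left-hand side of \eqref{eq:conicalh} to
\[
\iint_{|x-y|<c\sqrt t}|e^{-tL}f(y)|^2\,\frac{dydt}{t^{n/2+1}},
\]
and this quantity is $+\infty$ for every nonzero $f$: as $t\to 0$ one has $e^{-tL}f\to f$, so the $dt/t$ integral diverges at the origin. Once the gradient is removed there is no cancellation left, so the ``conical functionals built from $e^{-tL}$ itself'' are not controllable on any $L^q$ --- they are not even finite. Repairing this forces you to subtract constants in Caccioppoli (using $e^{-tL}1=1$ for real $A$) or otherwise reinsert cancellation, at which point you are again facing a square function estimate of the same kind; and the unspecified ``$L$-adapted sharp-maximal argument,'' in its standard form, needs $L^2$--$L^q$ off-diagonal bounds for some $q>p$ and would cap the range at $q_+(L)$, which is exactly the obstruction to be overcome.

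The paper's route avoids Caccioppoli (and does not use realness of $A$ here): $\sqrt t\,\nabla e^{-tL}$ always satisfies $L^2$--$L^2$ off-diagonal estimates with Gaussian decay, since $q_-(L)<2<q_+(L)$ for every elliptic $A$. Hence the Fefferman--Stein argument of \cite{FS} applies directly to $f\mapsto t\nabla e^{-t^2L}f$: split $f=f\chi_{4B}+f\chi_{(4B)^c}$, handle the local part by the $L^2$ bound and the global part by the off-diagonal decay integrated over $t\in(0,r_B)$, obtaining an $L^\infty\to T^\infty_2$ (Carleson measure) estimate, and then interpolate with the $L^2\to T^2_2$ bound in the tent-space scale of \cite{CMS}. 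The correct moral is that the conical endpoint is a Carleson measure condition requiring only $L^2$ information, whereas the vertical square function at $p>2$ genuinely requires $L^p$ boundedness of $\sqrt t\,\nabla e^{-tL}$; that, not Caccioppoli, is why the conical range is all of $(p_-(L),\infty)$.
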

 Thus \eqref{eqDV} holds  at least  with a  dimension dependent bound . We shall also study \eqref{eq:conicalh} for all complex $A$ and show it holds when $p_{-}(L)<p<\infty$ and fails when $p<p_{-}(L)$ (if $p_{-}(L)>1$). This is consistent as $p_{-}(L)=1$ when $A$ is real. This  also improves  \cite[Corollary 6.10]{A} where \eqref{eq:conicalh} was  obtained in the range
$p_{-}(L)<p<q_{+}(L)$. The  bilinear inequality as in  Proposition \ref{propDV}  then holds  for a restricted range $p_{-}(L)<p<p_{-}(L)'=p_{+}(L^*)$.

This leads us to the main point of this article about  comparison between vertical and conical square functions. Propositions \ref{propDV} and \ref{prop:Conical-real} show that the ranges of $p$ below 2 are the same but differ above 2. One may wonder whether there is an abstract principle behind this. But this is not the case.  Aside from $p=2$ for which the averaging trick yields that they are equivalent, vertical and conical square functions only compare one way  for $p\ne 2$ in the sense that one is automatically controlled by the other and simple examples show the converses fail. More precisely, for $p>2$, it is well-known and we shall recall why in Section \ref{sec:CvsV}, that a  vertical square function controls the corresponding conical one.
We shall also prove, and it seems this is not in the literature, that for $p<2$, the conical square function controls the corresponding vertical one. Comparing the ranges for \eqref{eq:conicalh} and \eqref{eq:verticalh} already furnishes
a counter-example for the converse in the $p>2$ range and an example where the converse holds in the $p<2$ range.  We note this can be done on a space of homogeneous type.
 We shall also study some weigthed comparisons using extrapolation.

We finish this introduction by the following observations. As explained before the range of $p$ for \eqref{eq:verticalh} is tight to the range of $L^p$ boundedness for $\sqrt{t} \nabla e^{-tL}$. As the $p<2$ range for \eqref{eq:conicalh} is \textit{a priori} smaller than or equal to the $p<2$ range for  \eqref{eq:verticalh},  we obtain the best possible result by showing they are equal. For $p>2$ we exhibit a new phenomenon.

Our results show that  the $p>2$ range for  \eqref{eq:conicalh} is
linked  to the rate of decay in the $L^2$ off-diagonal estimates. If the latter
is fast enough then one obtains the full range $(2,\infty)$ as it is the case in \eqref{eq:conicalh}.  In fact this $L^p$ estimate amounts  to proving  boundedness  of some vector-valued operator from $L^p$ into the parabolic version of the tent space $T^p_{2}$ of Coifman, Meyer, Stein \cite{CMS}.
When the $L^2$ off-diagonal decay is fast enough we can prove, basically following the Fefferman-Stein argument, that $|\nabla_{y} e^{-tL}f(y)|^2
{dydt}$ is a parabolic  Carleson measure for $f\in L^\infty$, which is nothing but an  $L^\infty \to T^\infty_{2}$ estimate. One can then interpolate for $2<p<\infty$.
When the rate of decay is slow (for example polynomial with small exponent) this argument does not seem to adapt and one needs other tools.  This is the case for the conical square function based on  $\varphi(t^2L)$ when $A$ is complex and $\varphi$ not smooth as the origin. An example is the Poisson semigroup  since $\varphi(z)=e^{-z^{1/2}}$ in this case. A different ingredient then comes into play, which is the decay at 0 of $\varphi$ or the order at which it vanishes, combined with the definition of $p_{+}(L)$. For instance, in Section \ref{ss3.4} below we shall prove the following  and this is the $p>2$ range that is interesting for our discussion here.  
\begin{proposition}\label{prop1.4} For  $m$  a non negative integer and   $f\in L^p$, then
\begin{equation} \label{eq:m}
\int_{\RR^n} \left(\iint_{|x-y| <  t}  |t\nabla_{y,t}\big((t^2L)^me^{-tL^{1/2}}f\big)(y)|^2 \frac
{dydt}{{t}^{n+1}}\right)^{p/2} \, dx \le C \|f\|_{p}^p
\end{equation}
whenever
\begin{equation}\label{range:m}
p_{-}(L)<p< \frac{np_{+}(L)}{n-(2m+1)p_{+}(L)}.
\end{equation}
For $(2m+1)p_{+}(L) \ge n$ then the right hand side in \eqref{range:m} becomes $\infty$.
\end{proposition}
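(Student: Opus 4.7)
The plan is to split the range of $p$ into three regimes: the endpoint $p=2$, the subdiagonal range $p_{-}(L)<p<2$, and the superdiagonal range $2<p<np_{+}(L)/(n-(2m+1)p_{+}(L))$. Throughout, I write $\Phi_m(z)=z^{2m}e^{-z}$ so that $(t^2L)^me^{-tL^{1/2}}=\Phi_m(tL^{1/2})$, and I handle the time derivative by noting $t\partial_t\Phi_m(tL^{1/2})=\Psi(tL^{1/2})$ with $\Psi(z)=2mz^{2m}e^{-z}-z^{2m+1}e^{-z}$, which still vanishes at the origin to order at least $2m$ and has exponential decay at infinity. The spatial gradient piece is dealt with via subordination: $e^{-tL^{1/2}}=\int_0^\infty e^{-sL}\mu_t(s)\,ds$ with $\mu_t(s)=\frac{t}{\sqrt{4\pi s^3}}e^{-t^2/(4s)}$, which turns $t\nabla_y\Phi_m(tL^{1/2})$ into an integral of $\sqrt{s}\nabla_y e^{-sL}$ against a probability measure in $s$ (after redistributing powers of $s/t$), for which $L^2$ off-diagonal (Gaffney) estimates and $L^p$-boundedness in $(q_-(L),q_+(L))$ are available from \cite{A}.

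The case $p=2$ reduces by Fubini to a vertical quadratic estimate $\iint_{\RR^{n+1}_+}|t\nabla_{y,t}\Phi_m(tL^{1/2})f|^2\,\dfrac{dydt}{t}\lesssim \|f\|_2^2$. For the $t$-derivative component this is the standard $L^2$ quadratic estimate associated to the $H^\infty$-functional calculus of the maximal-accretive operator $L^{1/2}$, applied to the auxiliary symbol $\Psi$. For the spatial gradient component, one invokes either the $L^2$ square function bound for $\sqrt{t}\nabla e^{-tL}$ combined with the subordination integral (and Minkowski), or a direct Caccioppoli/resolvent argument on annuli.

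For $p_{-}(L)<p<2$, I would follow the Calder\'on–Zygmund extrapolation scheme for conical square functions developed in \cite{A} (cf.\ the argument leading to \cite[Corollary 6.10]{A}). The $L^2$ bound above combines with $L^{p_{-}(L)+\varepsilon}\to L^2$ off-diagonal estimates for $e^{-sL}$ on annuli, applied to the atoms of a CZ decomposition of $f$ at level $\alpha^p$. Because one only needs to control $e^{-sL}$ and $\sqrt{s}\nabla e^{-sL}$ at a single composition, the resulting range is exactly $(p_-(L),2]$; this part of the proof is essentially routine given the machinery already set up in \cite{A} and the subordination identity.

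The main obstacle is the range $2<p<np_{+}(L)/(n-(2m+1)p_{+}(L))$, where the slow (merely polynomial) off-diagonal decay of the Poisson semigroup prevents a direct Carleson-measure/BMO argument. By the averaging trick recalled in the introduction, the estimate is equivalent to the $L^p\to T^p_2$ boundedness of the vector-valued operator $f\mapsto t\nabla_{y,t}\Phi_m(tL^{1/2})f$, and I would prove this by interpolating with the endpoint $L^q\to T^q_2$ for $q$ just below the critical exponent. To reach that endpoint, I would exploit that $\Phi_m$ vanishes at the origin to order $2m$ and that $\Psi$ contains an extra factor $z$ from differentiation, giving a total vanishing order of $2m+1$; this allows one to write the symbol as $(tL^{1/2})^{2m+1}\widetilde\Phi(tL^{1/2})$ with $\widetilde\Phi$ a Schwartz-class factor on $[0,\infty)$. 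Subordinating to the heat semigroup and distributing the $(2m+1)$ factors of $tL^{1/2}$ as $(2m+1)$ factors of $\sqrt{s}\,\nabla_y$ (up to commutators absorbed by Caccioppoli), each gradient loses $n/p_{+}(L)-n/p$ of integrability by Sobolev embedding; iterating $2m+1$ times yields the Sobolev exponent $1/p=1/p_{+}(L)-(2m+1)/n$, which is exactly the critical upper endpoint in \eqref{range:m}. Once this endpoint is reached, the Fefferman–Stein Carleson/$T^\infty_2$ argument in that regularized scale interpolates with the $L^2$ bound to give the full open range, with the convention that if $(2m+1)p_{+}(L)\ge n$ then Sobolev embedding goes into $L^\infty$ and the range extends to all $p>2$. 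The delicate point, and where I expect most of the technical work, is making the distribution-of-gradients step rigorous via a discrete decomposition in $s$ together with the Gaffney bounds; this is why the result is genuinely linked to $p_{+}(L)$ and to the order of vanishing $m$.
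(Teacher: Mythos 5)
Your treatment of $p=2$ and of $p_{-}(L)<p<2$ is sound and close to what the paper does (the sub-$2$ range is handled there either by $H^p_L$ theory or by exactly the Calder\'on--Zygmund scheme you describe). The genuine gap is in the superdiagonal range, where your central step --- writing the symbol as $(tL^{1/2})^{2m+1}\widetilde\Phi(tL^{1/2})$ and then ``distributing the $2m+1$ factors of $tL^{1/2}$ as $2m+1$ factors of $\sqrt{s}\,\nabla_y$, iterating Sobolev embedding'' --- is not available for divergence-form operators with merely bounded measurable coefficients. One cannot trade powers of $L^{1/2}$ for gradients beyond first order: $\nabla L^{-1/2}$ is bounded on $L^q$ only for $q<q_+(L)$ (which may be arbitrarily close to $2$), $\nabla^2 e^{-sL}$ is in general unbounded even on $L^2$, and there is no higher-order regularity theory to feed an iterated Sobolev embedding. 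So the near-critical endpoint $L^q\to T^q_2$ is not reached by this route; moreover, when $(2m+1)p_{+}(L)<n$ there is no $T^\infty_2$ endpoint at all, so one cannot close the argument by interpolating in $p$ against the $L^2$ bound. A smaller but real defect: in your splitting $t\partial_t\Phi_m(tL^{1/2})=2m(tL^{1/2})^{2m}e^{-tL^{1/2}}-(tL^{1/2})^{2m+1}e^{-tL^{1/2}}$, the first summand vanishes only to order $2m$, so even granting your scheme it would yield the strictly smaller range $p<np_{+}(L)/(n-2mp_{+}(L))$ when $m\ge1$.

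The paper avoids both difficulties with a different architecture. A Caccioppoli-type lemma (Lemma \ref{lem:Caccio}) replaces $t\nabla_{y,t}(t^2L)^me^{-tL^{1/2}}f$, in conical $L^2$ averages, by heat-semigroup terms $(t^2L)^me^{-t^2L}f$ and $t\nabla_{y,t}\big((t^2L)^me^{-t^2L}f\big)$ --- which have Gaussian $L^2$ off-diagonal decay and are therefore bounded for all $2<p<\infty$ by the Fefferman--Stein argument --- plus the gradient-free difference $(t^2L)^m(e^{-tL^{1/2}}-e^{-t^2L})f$. That difference is $\varphi_\alpha(t^2L)f$ with $\alpha=m+1/2$, i.e.\ vanishing of order $2m+1$ in $tL^{1/2}$, and is handled by a family of tent-space estimates for $T_\alpha$: $L^p\to T^p_2$ for $2\le p<p_{+}(L)$ whenever $\Re\alpha>0$, and $L^\infty\to T^\infty_2$ whenever $2\Re\alpha>n/p_{+}(L)$, via a single Carleson-measure computation in which the factor $2^{-j(2\Re\alpha-n/q)}$ with $q<p_{+}(L)$ plays the role your Sobolev count was meant to play --- once, not iteratively. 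The intermediate range is then obtained by Stein's complex interpolation in the parameter $\alpha$, after embedding $T^p_2$ isometrically into a vector-valued $L^p$ space (Harboure--Torrea--Viviani). To repair your outline you would need to (i) strip the spatial gradient before any tent-space analysis of the Poisson term, and (ii) replace the iterated Sobolev step by an analytic-family interpolation of this kind.
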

 If $L$ had been the Laplacian, $2m+1$ would just be the number of vanishing moments for the kernel of the convolution operator $\nabla_{y,t}(t^2L)^m e^{-tL^{1/2}}$ or the number of derivatives in front of the semigroup. Here we have $2m$th order ``vanishing" coming from the exponent of the second order operator $L$, and +1 comes from the gradient.

A few comments are in order.  We first point out that,
since vertical and conical square functions are equivalent in  $L^2$,
one may integrate by parts in $t$ and use properties of the semigroup to pass from any choice
of non-negative integer $m$ to another, in the case $p=2$.  In fact, one may even take $m$ of the form
$m=k+1/2$, with $k$ a non-negative integer.  For $p>2$, different values of $m$ apparently need no longer be equivalent;  the conclusion of the proposition yields a better range of $p$ for larger $m$ (up to the critical
value with $(2m+1)p_+(L)=n$).

In particular,  the case $m=0$  of \eqref{eq:m} gives standard
area integral estimates for weak solutions of the equation
\begin{equation}
\label{eq:elli}
\partial_t^2 u + \dv A \nabla u=0.
\end{equation}
When $A$ is real (in which case $p_-(L)=1,\,p_{+}(L)=\infty)$), such estimates may be obtained
as a special case (the ``block matrix" case), of the result of
Dahlberg, Jerison, Kenig in  \cite{DJK}, using the fact that one has non-tangential estimates
for the solutions $u(\cdot,t):= e^{-t\sqrt{L}}f$ in every $L^p, 1<p\leq \infty.$
The present argument allows for a direct (and simpler) proof
than that in \cite{DJK} in this special case.  Moreover, it
has the added virtue of applying to the case of complex coefficients.  Of course, we do not address the question of ``full" coefficient matrices (i.e., those that need not be in block form), as is done in \cite{DJK}.

\subsection*{Acknowledgments} This work was started years ago while the authors were all visiting the Universidad Aut\'onoma in Madrid on the occasion of a special program in harmonic analysis and, after a latency period, finished this year while the authors were all visiting the Center for Mathematics and Applications of the Australian National University. We are very grateful to these institutions for hospitality and financial support.
Also we want to express our  thanks to O. Dragi\v{c}evi\'c and A. Volberg who showed us their unpublished work.

\section{Vertical versus conical}\label{sec:CvsV}

For a locally square integrable  function $f$ on $\RR^{n+1}_{+}$, denote $$
Sf(x)= \left(\iint_{|x-y| <  t}  |f(y,t)|^2 \,\frac
{dydt}{{t}^{n+1}}\right)^{1/2}, \quad x\in \RR^n.
$$
and
$$
Vf(x)= \left(\int_{t>0}  |f(x,t)|^2 \, \frac
{dt}{{t}}\right)^{1/2}, \quad x\in \RR^n.
$$
$Sf$ is lower semi-continuous hence a measurable function. Measurability on $Vf$ follows from the local square integrability of $f$.

We remark that
$$
\|Sf\|_{2}^2= b_{n} \|Vf\|_{2}^2
$$
with $b_{n}$ the volume of the unit Euclidean ball.

\subsection{Comparison in Lebesgue spaces}

\begin{proposition}\label{prop:comp} Let $f$ be locally square integrable on $\RR^{n+1}_{+}$.
\begin{list}{$(\theenumi)$}{\usecounter{enumi}\leftmargin=1cm \labelwidth=1cm \itemsep=0.1cm \topsep=.2cm \renewcommand{\theenumi}{\alph{enumi}}}
  \item For $2<p<\infty$, $$
\|Sf\|_{p} \le C(p,n)\|Vf\|_{p}.
$$
  \item For $0<p<2$, $$
\|Vf\|_{p} \le C(p,n)\|Sf\|_{p}.
$$
\item The converses fail for all $p\ne 2$.
\end{list}
\end{proposition}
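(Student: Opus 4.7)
The plan is to prove (a) by duality against the Hardy--Littlewood maximal function, (b) by tent-space atomic decomposition for $p\le 1$ and tent-space duality for $1<p<2$, and (c) by exhibiting an explicit one-parameter family concentrated near a vertical line.

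For (a) with $2<p<\infty$, I would use duality: $\|Sf\|_p^2 = \sup_g \int (Sf)^2 g\,dx$ over $g\ge 0$ with $\|g\|_{(p/2)'}=1$. Since $(p/2)'>1$, the Hardy--Littlewood maximal operator $M$ is bounded on $L^{(p/2)'}$. Fubini gives
\[
\int (Sf)^2 g\,dx = \iint_{\RR^{n+1}_+}|f(y,t)|^2\,\Big(\frac{1}{t^{n+1}}\int_{B(y,t)}g(x)\,dx\Big)\,dy\,dt \le c_n \int Mg(y)\,(Vf)^2(y)\,dy,
\]
and H\"older closes the estimate.

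For (b) I would treat two subranges. In $0<p\le 1$, invoke the atomic decomposition of the Coifman--Meyer--Stein tent space $T^p_2$ \cite{CMS}: $f=\sum_j \lambda_j a_j$ with each $a_j$ supported in a tent $\widehat{B_j}$ satisfying $\iint_{\widehat{B_j}}|a_j|^2\,dy\,dt/t \le |B_j|^{1-2/p}$, together with $\sum_j |\lambda_j|^p \le C\|Sf\|_p^p$. Since $\widehat{B_j}$ projects into $B_j$, $\supp Va_j \subset B_j$; H\"older and the atom size give the uniform bound $\|Va_j\|_p\le C$. Combining the pointwise inequality $Vf(x)\le \sum_j|\lambda_j|Va_j(x)$ with the $p$-triangle inequality (valid for $p\le 1$) yields $\|Vf\|_p^p \le \sum_j |\lambda_j|^p\|Va_j\|_p^p \le C\|Sf\|_p^p$. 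In $1<p<2$, I would use the tent-space duality of \cite{CMS}, which says $|\iint fg\,dx\,dt/t|\le C\|Sf\|_p\|Sg\|_{p'}$. The standard pairing identifies $\|Vf\|_p$ with the supremum of $|\iint fg\,dx\,dt/t|$ over $g$ with $\|g\|_{L^{p'}(\RR^n, L^2(dt/t))}\le 1$. Since $p'>2$, part (a) applied to $g$ gives $\|Sg\|_{p'} \le C\|Vg\|_{p'} \le C$, whence $\|Vf\|_p \le C\|Sf\|_p$.

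For (c) I would take a nonzero $g_0 \in L^2(dt/t)$ supported in $[1,2]$ and set $f_\delta(y,t) := \chi_{B(0,\delta)}(y)\,g_0(t)$. A direct calculation yields $\|Vf_\delta\|_p \approx \delta^{n/p}$ and $\|Sf_\delta\|_p \approx \delta^{n/2}$ as $\delta\to 0^+$, with implicit constants depending only on $p,n,g_0$. For $p>2$, the ratio $\|Vf_\delta\|_p/\|Sf_\delta\|_p \approx \delta^{n/p-n/2}$ diverges, refuting the converse of (a); for $p<2$, $\|Sf_\delta\|_p/\|Vf_\delta\|_p \approx \delta^{n/2-n/p}$ diverges, refuting the converse of (b). The main obstacle is the subrange $1<p<2$ of (b): the crude atomic bound $\|Vf\|_p \le \sum_j |\lambda_j|\|Va_j\|_p \le \sum_j |\lambda_j|$ is not controlled by $\|Sf\|_p\approx(\sum_j|\lambda_j|^p)^{1/p}$, so a genuinely non-atomic device is required. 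The duality-plus-(a) argument above bypasses this; an alternative is complex interpolation of the identity map $T^p_2 \to L^p(\RR^n,L^2(dt/t))$ between its bounded endpoints at $p=1$ and $p=2$.
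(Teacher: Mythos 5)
Your part (a) reproduces the paper's own argument verbatim (dualize $(Sf)^2$ against $L^{(p/2)'}$, apply Fubini and the averaging identity, and bound the inner average by $Mg$). Parts (b) and (c) are correct but follow genuinely different routes. For (b), the paper gives one self-contained good-$\lambda$ argument covering all $0<p<2$ at once: setting $O=\{Sf>\lambda\}$, $F=O\comp$, $\tilde O=\{M\chi_O>1/2\}$ and $\tilde F=\tilde O\comp$, one shows $\int_F (Sf)^2\gtrsim\int_{\tilde F}(Vf)^2\gtrsim\lambda^2|\{Vf>\lambda\}\cap\tilde F|$ while $|\{Vf>\lambda\}\cap\tilde O|\lesssim|O|\lesssim\lambda^{-r}\int_O(Sf)^r$, and then integrates in $\lambda$. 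Your split into the $T^p_2$ atomic decomposition for $p\le 1$ and tent-space duality (or interpolation) for $1<p<2$ is valid and not circular, since part (a) is invoked only at the exponent $p'>2$; its price is the reliance on the Coifman--Meyer--Stein machinery and the (routine but necessary) justification that the a.e.\ convergence of the atomic series yields the pointwise bound $Vf\le\sum_j|\lambda_j|\,Va_j$, whereas the paper's argument is elementary, uniform in $0<p<2$, and transfers verbatim to spaces of homogeneous type. For (c), your single family $f_\delta=\chi_{B(0,\delta)}\otimes g_0$ with $\|Vf_\delta\|_p\approx\delta^{n/p}$ and $\|Sf_\delta\|_p\approx\delta^{n/2}$ is correct and efficiently kills both converses at once; the paper instead linearizes to $\tilde S,\tilde V$ and uses two separate families (one spread over $t\in(0,N]$ above the unit ball, one concentrated on a ball of radius $1/N$), a slightly longer path chosen so that the computation carries over to general doubling measures.
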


\begin{proof}
Part $(a)$ is standard and appears already in \cite[p. 91]{St}.  For the sake of self-containment, we
recall the argument. As $p>2$, $q=p/2>1$ and we can estimate $\|Sf\|_{p}=\|(Sf)^2\|_{p/2}^2$ by dualizing against a function $h\in L^{q'}$. Now, the averaging trick applies and yields
\begin{align*}
   \int_{\RR^n} (Sf)^2(x) h(x) \, dx &= b_{n}\iint_{\RR^{n+1}_{+}} |f(y,t)|^2 \left(\frac1{|B(y,t)|}\,\int_{B(y,t)} h(x)\,dx\right)
 \,\frac{dydt}{{t}}
    \\
    &
 \le b_{n}\int_{\RR^n} (Vf)^2(y) Mh(y)\, dy
 \\
    &
 \le
 b_{n}  \|Vf\|_{p}^2\|Mh\|_{q'}
\end{align*}
and one concludes using the boundedness of the  maximal operator over balls $M$  in $L^{q'}$.

Let us now prove Part $(b)$.  Fix $0<p<2$, $f$ with $Sf\in L^p$ and $\lambda>0$. Since $x\mapsto Sf(x)$ is  lower semi-continuous, the  set $O=\{Sf>\lambda\}$  is open.
 Let $F$ be the complement of $O$ in $\RR^n$,
 $R(F)$ be the union of the cones $|x-y|<t$ with vertices $x\in F$. We also set
$\tilde{O}=\{x\in\RR: M(\chi_O)(x)>1/2\}$ and $\tilde{F}=\RR^n\setminus\tilde{O}$.
We note that $O\subset\tilde{O}$ since $O$ is open, and thus $\tilde{F}\subset F$. If $y\in \tilde F$ and $t>0$ we have $|O\cap B(y,t)|/|B(y,t)|\le 1/2$ and consequently, $|F\cap B(y,t)|/t^n\ge b_n/2$.
Hence,
\begin{align*}
 \int_{F} (Sf)^2(x)\, dx
& =
 \int_{x\in F} \iint_{|x-y| <  t}  |f(y,t)|^2\,  \frac
{dxdydt}{{t}^{n+1}}
\\
&
= \iint_{R(F)}  \frac{|F\cap B(y,t)|}{t^n}|f(y,t)|^2 \,\frac{dydt}{{t}}.
\\
&
\ge \frac{b_{n}}{2} \int_{y\in \tilde F} \int_{t>0}  |f(y,t)|^2\,  \frac{dydt}{{t}}
\\
&
=
\frac{b_{n}}{2} \int_{\tilde F} (Vf)^2(y)\, dy
\\
&
 \ge
\frac{b_{n}}{2} \lambda^2 |\{Vf>\lambda\} \cap \tilde F|.
\end{align*}
Besides, for $0<r<p$, using the weak type (1,1) for $M$,
$$
|\{Vf>\lambda\} \cap\tilde O|\le |\tilde O| \le 2\cdot 3^n\,|O| \le  \frac{ 2\cdot 3^n}{\lambda^r}\int_{O}(Sf)^r(x)\, dx.$$
Hence,
\begin{align*}
\int_{\RR^n} (Vf)^p(x)\, dx
    & = p\int_{0}^\infty \lambda^{p-1} |\{Vf>\lambda\}|\, d\lambda  \\
    &  \le \frac{2p}{b_{n}} \int_{0}^\infty \lambda^{p-2-1} \int_{Sf\le \lambda} (Sf)^2(x)\, dxd\lambda   \\
    &\qquad\qquad +   {2p \cdot 3^n}{} \int_{0}^\infty \lambda^{p-r-1} \int_{Sf> \lambda} (Sf)^r(x)\, dxd\lambda
    \\
    &= \left( \frac{2p}{b_{n}(2-p)}
  + \frac{2p\cdot 3^n}{p-r}\right) \int_{\RR^n} (Sf)^p(x)\, dx.
\end{align*}

We now finish the proof with Part $(c)$. It is convenient to introduce $$
\tilde{S}f(x)= \iint_{|x-y| <  t}  |f(y,t)| \frac
{dydt}{{t}^{n+1}},
\qquad
\tilde{V}f(x)= \int_{t>0}  |f(x,t)| \frac
{dt}{{t}}.
$$
Note that $S f=\tilde{S}(|f|^2)^{1/2}$ and $V f=\tilde{V}(|f|^2)^{1/2}$, so that,
for a locally integrable function $f$ on $\RR^{n+1}_{+}$, we seek to disprove the inequalities
\begin{align}
\|\tilde{S} f\|_{p}&\le C\,\|\tilde{V} f\|_{p}, \quad 0<p<1; \label{Sf-Vf-counter}
\\[0.2cm]
\|\tilde{V} f\|_{p}&\le C\,\|\tilde{S} f\|_{p}, \quad 1<p<\infty. \label{Vf-Sf-counter}
\end{align}
We write the argument so that it is easy to adapt it to a space of homogeneous type, denoting $v(B)$ the volume of a ball and using implicitly the doubling property in the argument. See Remark \ref{remark:SHT} below.

For \eqref{Sf-Vf-counter} we consider $f_N(x,t)=N^{-1}\,t\,\chi(x)\,\chi_0(t/N)$ with $N\gg 1$ and where $\chi$ is the characteristic function of the unit ball $B(0,1)$ and $\chi_0$ denotes the characteristic function of the interval $[0,1]$. On the one hand,
$$
\tilde{V} f_N(x)=
\int_0^\infty |f_N(x,t)| \frac{dt}{t}
=
N^{-1}\,\chi(x)\,\int_0^{N}\,dt
=
\chi(x)
$$
and therefore
$$
\|\tilde{V} f_N\|_{p}^p
= v(B(0,1)).
$$
On the other hand, fixed $|x|\le N/8$, if $|y|\le 1$ we have $|x-y|<N/4$ (provided $N>8$) and then 
\begin{align*}
\tilde{S} f_N(x)
&=
N^{-1}\,\iint_{|x-y| <  t}  t\,\chi(y)\,\chi_0(t/N)\frac
{dydt}{{t}v(B(y,t))}
\\
&
=
N^{-1}\,\int_{|y|\le 1}\int_{|x-y|<t\le N} \frac{dtdy}{v(B(x,t))}
\\
&\ge
N^{-1}\,\int_{|y|\le 1}\int_{N/4<t\le N} \frac{dtdy}{v(B(x,t))}
\\
&\ge C\,  \frac{v(B(0,1))}{v(B(0,N))}.
\end{align*}
This implies
$$
\|\tilde{S} f_N\|_{p}^p
\ge
\int_{|x|\le N/8} \tilde{S} f(x)^p\,dx
\ge C\, \frac{v(B(0,1))^p}{v(B(0,N))^{p-1}}.
$$
Gathering the obtained estimates
$$
\frac{\|\tilde{S} f_N\|_{p}^p}{\|\tilde{V} f_N\|_{p}^p} \ge C\, \frac{v(B(0,N))^{1-p}}{v(B(0,1))^{1-p}}.$$
Thus \eqref{Sf-Vf-counter} cannot hold as $v(B(0,N))$ increases to $\infty$ and $1-p>0$.

 For \eqref{Vf-Sf-counter} we consider $f_N(x,t)= \,t\,v(B(x,t))\, \chi_{N}(x)\,\chi_0(t)$ with $N\gg 1$ where $\chi_{N}$ is the characteristic function of the ball of radius $1/N$. We first calculate $\tilde{V} f_N$:
\begin{align*}
\tilde{V} f_N(x)
&=
\int_0^\infty |f_N(x,t)| \frac{dt}{t}
\\
&\ge
\,\chi_{N}(x)\,
\int_{1/2}^{1}\,v(B(x,t))\,{dt}
\\
&\ge
C \,\chi_{N}(x) v(B(x,1))
\\
&\ge C \,\chi_{N}(x) v(B(0,1))
\end{align*}
and therefore
$$
\|\tilde{V} f_N\|_{p}^p
\ge C v(B(0,1/N)) v(B(0,1))^p.
$$
We find an upper bound for  $\|\tilde{S} f_N\|_{p}$. We notice that if $|x|>2$, $|y|\le 1/N$ and $0\le t\le 1$ we have
 $|x-y|>1\ge t$ (if $ N\ge 1$). Thus, $\tilde{S}f_{N}(x)=0$ if $|x|>2$. On the other hand, for all $x\in \RR^n$:
\begin{equation*}
\tilde{S} f_N(x)
=
\,\iint_{|x-y| <  t\le 1}  \,\chi_{N}(y)\,
{dydt}{}
\\
\le
v(B(0,1/N)).
\end{equation*}
Then, we obtain
$$
\|\tilde{S} f_N\|_{p}^p
=
\int_{|x|\le 2} \tilde{S} f_N(x)^p\,dx
\le C v(B(0,1/N))^p v(B(0,1))
$$
so that
$$
\frac{\|\tilde{S} f_N\|_{p}^p}{\|\tilde{V} f_N\|_{p}^p} \le C\, \frac{v(B(0,1/N))^{p-1}}{v(B(0,1))^{p-1}}$$
which goes to 0 as $N\to \infty$ if $p>1$.
\end{proof}

\begin{remark}\label{remark:SHT} The reader can notice that this theorem generalizes to spaces of homogeneous type $X$ with infinite volume and at least one point that is not an atom (which plays the role of $0$). That is $\RR^{n+1}_{+}$ is changed to $X\times \RR_{+}$ and in the definition of $Sf(x)$ the measure has to change to $\frac{d\mu(y)dt}{t\mu(B(y,t))}$.
\end{remark}

\subsection{Weighted estimates via extrapolation}

Let us present a  weighted version of  Proposition \ref{prop:comp} using extrapolation.  That is, $L^2$ estimate with suitable   Muckenhoupt weights imply  $L^p$ comparisons in weighted spaces.   Let $A_{p}$, $1\le p<\infty$, denote the classical Muckenhoupt classes of weights and $RH_{p}$, $1<p\le \infty$, the class of reverse H\"older weights. See for example \cite{AM1}. Again everything extends to a space of homogeneous type as in the remark above. We stick to the Euclidean space for simplicity.

\begin{proposition}\label{prop:weightedcomp} Let $f$ be a locally square integrable function on $\RR^{n+1}_{+}$.

\begin{list}{$(\theenumi)$}{\usecounter{enumi}\leftmargin=1cm \labelwidth=1cm \itemsep=0.1cm \topsep=.2cm \renewcommand{\theenumi}{\alph{enumi}}}

\item For $2<p<\infty$ and $w\in A_{p/2}$
$$
\|S f\|_{L^p(w)}
\le
C(p,w)\,
\|V f\|_{L^p(w)}.
$$
  \item For $0<p<2$ and $w\in RH_{(2/p)'}$
$$
\|V f\|_{L^p(w)}
\le
C(p,w)\,
\|S f\|_{L^p(w)}.
$$
\end{list}
\end{proposition}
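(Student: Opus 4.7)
My plan is to reduce both inequalities to weighted $L^2$ comparisons via the averaging identity
\begin{equation*}
\int_{\RR^n}(Sf)^2(x)\,g(x)\,dx = b_n \iint_{\RR^{n+1}_+} |f(y,t)|^2 \,\aver{B(y,t)} g\, \frac{dy\,dt}{t}, \qquad g\ge 0,
\end{equation*}
already used in the proof of Proposition~\ref{prop:comp}, and then to pass to the full range of $p$ either by duality or by extrapolation.

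For part (a) I would argue directly by duality. Setting $q = p/2 > 1$, I write $\|Sf\|_{L^p(w)}^2 = \|(Sf)^2\|_{L^q(w)}$ and pair against a non-negative $h\in L^{q'}(w)$ of unit norm. Applying the identity with $g = hw$, dominating $\aver{B(y,t)}(hw) \le M(hw)(y)$, and splitting by H\"older with respect to the measure $w\,dy$ yields
\begin{equation*}
\int (Sf)^2\,hw\,dx \le C\,\|Vf\|_{L^p(w)}^2\,\|M(hw)\|_{L^{q'}(\sigma)}, \qquad \sigma := w^{1-q'}.
\end{equation*}
Since $w\in A_{p/2}$ is equivalent to $\sigma\in A_{q'}$, the maximal operator is bounded on $L^{q'}(\sigma)$, and a direct computation gives $\|hw\|_{L^{q'}(\sigma)} = \|h\|_{L^{q'}(w)}\le 1$. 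Taking the supremum over $h$ yields the stated inequality.

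For part (b) I would first establish the endpoint $L^2$ comparison
\begin{equation*}
\|Vf\|_{L^2(w)} \le C\,\|Sf\|_{L^2(w)}, \qquad w\in RH_\infty,
\end{equation*}
directly from the averaging identity applied to $g = w$: the defining inequality of $RH_\infty$ gives $w(y)\le C\,\aver{B(y,t)}w$ for a.e.\ $y$, which is exactly what is needed. I would then invoke the off-diagonal extrapolation theorem of \cite{AM1}, applied to the family of pairs $(Vf,Sf)$ with $p_- = 0$ and $p_+ = 2$ (and base exponent $p_0 = 2$). Under the convention $A_{p/0} := A_\infty$, the corresponding weight class at the base level is $A_\infty \cap RH_\infty = RH_\infty$, matching our $L^2$ input; the output class at level $p$ is $A_\infty \cap RH_{(2/p)'} = RH_{(2/p)'}$ for $0<p<2$, which is exactly the hypothesis of the proposition.

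The main obstacle will be part (b): a direct good-$\lambda$ argument along the lines of Proposition~\ref{prop:comp}(b) reduces matters to proving $\int_{\tilde F}(Vf)^2\,w\le C\int_{F}(Sf)^2\,w$, and the averaging identity shows that this step requires $\aver{B(y,t)}w \gtrsim w(y)$, i.e.\ $w\in RH_\infty$. Since this is strictly stronger than the hypothesis $w\in RH_{(2/p)'}$ of the proposition, extrapolation seems indispensable to open up the full reverse-H\"older range $(2/p)'\in(1,\infty]$. Part (a) could equally well be handled by extrapolation from the $A_1$ base estimate $\|Sf\|_{L^2(w)}\le C\,\|Vf\|_{L^2(w)}$ (which follows from the same identity via $\aver{B}w \le C\,\essinf_B w$), but the direct duality argument handles the whole class $A_{p/2}$ in one stroke.
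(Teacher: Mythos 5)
Your proposal is correct. For part (b) you follow essentially the same route as the paper: the same $RH_\infty$-weighted $L^2$ comparison derived from the averaging identity, followed by limited-range extrapolation (the paper cites \cite[Theorem 4.9]{AM1}; note this is the limited-range, not the off-diagonal, extrapolation theorem). The one point you gloss over is the lower endpoint $p_-=0$: the theorem is stated for $p_->0$, and the paper recovers your convention $A_{p/0}=A_\infty$ by running the extrapolation for the squared pairs $\big((Vf)^2,(Sf)^2\big)$ at exponent $q=p/2$ with $q_0=r=1$ and an \emph{arbitrary} $p_0\in(0,1)$, and then, given $w\in RH_{(1/q)'}\subset A_\infty$, choosing $p_0$ small enough that $w\in A_{q/p_0}$. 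You should also record the standard caveat that extrapolation with pairs yields the inequality only when the left-hand side is finite a priori; the paper arranges this by first taking $f$ bounded with compact support and then passing to general $f$ by monotone convergence. For part (a) your duality argument --- pairing against $h\in L^{q'}(w)$, passing to $M(hw)$, and using $w\in A_q\Leftrightarrow w^{1-q'}\in A_{q'}$ --- is a genuinely different and perfectly valid route; the paper instead proves the $L^1(w)$ estimate for the squared pairs for every $w\in A_1$ (via $w(B(y,t))/|B(y,t)|\le Mw(y)\le [w]_{A_1}w(y)$) and extrapolates, which keeps the two halves of the proposition symmetric but is otherwise equivalent, and your duality version has the small advantage of not needing the finiteness caveat. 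Your closing observation --- that a good-$\lambda$ proof of (b) would only reach $RH_\infty$ weights, so that extrapolation is genuinely needed to open the class up to $RH_{(2/p)'}$ --- is accurate and is precisely why the paper proceeds as it does.
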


\begin{proof} We begin with Part $(a)$.  Given any $w\in A_\infty$  we easily have
\begin{align}
\label{Sf-w}
\|S f\|_{L^2(w)}^2
&=
\int_{\RR^n}\,\iint_{|x-y| <  t}  |f(y,t)|^2 \frac
{dydt}{{t}^{n+1}}\,w(x)\,dx
\\
&=
b_n\int_{\RR^n}\,\int_0^\infty  |f(y,t)|^2 \frac{w(B(y,t))}{|B(y,t)|}
\frac
{dydt}{t}.\nonumber
\end{align}

We note that if $w\in A_1$, that is,  $Mw(y)\le [w]_{A_1}\,w(y)$ for a.e. $y\in \RR^n$, then  we have for all $t>0$
$$
\frac{w(B(y,t))}{|B(y,t)|}
\le
Mw(y)\le
[w]_{A_1}\,w(y),\qquad \mbox{a.e. }y\in\RR^n.
$$
Then, we obtain
$$
\|S f\|_{L^2(w)}^2
\le
b_n\,[w]_{A_1}\,\int_{\RR^n}\,\int_0^\infty  |f(y,t)|^2
\frac{dt}{t}\,w(y)\,dy
=
b_n\,[w]_{A_1}\,\|Vf\|_{L^2(w)}^2.
$$
Next we invoke the Rubio de Francia extrapolation theorem (see \cite{Rub}, \cite{Gar} for the original result, and \cite{CMP1}, \cite{CMP2} for a statement written in terms of pairs of functions) for the pairs $\big((Sf)^2, (Vf)^2\big)$: the starting estimate in $L^1(w)$ for every $w\in A_1$ implies that for every $2<p<\infty$ and $w\in A_{p/2}$
$$
\|S f\|_{L^p(w)}
\le
C(p,w)\,
\|V f\|_{L^p(w)}.
$$
Strictly speaking, the argument applies whenever the left hand side is finite. This is the case if $f$ is \textit{a priori} bounded with compact support in $\mathbb{R}^{n+1}_+$.
Monotone convergence implies that the inequality is valid for all locally square integrable function $f$.

For the reverse estimate in Part $(b)$, we recall that $w\in RH_\infty$ if for every ball $B$ we have
$$
w(x)\le [w]_{RH_\infty}\,\frac1{|B|}\,\int_B w(y)\,dy,\qquad\mbox{a.e. }x\in B.
$$
Then, using Lebesgue's differentiation theorem we obtain that for a.e $y\in\RR^n$ and for all $t>0$
\begin{align*}
w(y)
&
\le
\sup_{0<\tau\le t} \frac1{|B(y,\tau)|}\,\int_{B(y,\tau)} w(x)\,dx
\\
&
\le
[w]_{RH_\infty}\,
\sup_{0<\tau\le t} \frac1{|B(y,\tau)|}\,\int_{B(y,\tau)} \frac1{|B(y,t)|}\,\int_{B(y,t)} w(z)\,dz \,dx
\\
&
=
[w]_{RH_\infty}\,\frac{w(B(y,t))}{|B(y,t)|}.
\end{align*}
Thus, for every $w\in RH_\infty$ by \eqref{Sf-w} we have
\begin{align*}
\|V f\|_{L^2(w)}^2
&=
\int_{\RR^n}\,\int_0^\infty  |f(y,t)|^2
\frac{dt}{t}\,w(y)\,dy
\\
&
\le
[w]_{RH_\infty}\,
\int_{\RR^n}\,\int_0^\infty  |f(y,t)|^2 \frac{w(B(y,t))}{|B(y,t)|}
\frac
{dydt}{t}
\\
&
=
[w]_{RH_\infty}\,b_n^{-1}\,\|S f\|_{L^2(w)}^2.
\end{align*}
Considering the pairs $(F,G)=\big((Vf)^2, (Sf)^2\big)$ we have obtained that
$$
\int_{\RR^n} F(x)\,w(x)\,dx\le [w]_{RH_\infty}\,b_n^{-1}\,\int_{\RR^n} G(x)\,w(x)\,dx,\qquad \forall\,w\in RH_\infty.
$$
We take an arbitrary $p_0$ with $0<p_0<1$ and set $q_0=r=1$. Then the last estimate holds in particular for every $w\in A_{r/p_0}\cap RH_{(q_0/r)'}$. We apply the extrapolation theorem for limited ranges \cite[Theorem 4.9]{AM1} (see also \cite{CMP2}) to conclude that for all $p_0<q<q_0$
$$
\int_{\RR^n} F(x)^q\,w(x)\,dx\le C_w\,\int_{\RR^n} G(x)^q\,w(x)\,dx,\qquad \forall\,w\in A_{q/p_0}\cap RH_{(q_0/q)'},
$$
whenever the left hand side is finite. This is the case when $f$ is bounded with compact support in $\mathbb{R}^{n+1}_+$ and can be removed by monotone convergence to allow all locally square integrable function $f$.
Next, we fix $0<q<1$ and $w\in RH_{(1/q)'}$. Then, $w\in A_\infty$ and there exists $0<p_0<q$ such that $w\in A_{q/p_0}$. Thus we can apply the last estimate since $0<p_0<q<1=q_0$ and $w\in A_{q/p_0}\cap RH_{(q_0/q)'}$. Hence we have proved that for every $0<p<2$ and $w\in RH_{(2/p)'}$
$$
\|V f\|_{L^p(w)}
\le
C(p,w)\,
\|S f\|_{L^p(w)}.
$$
\end{proof}

Notice that from the argument one sees that the extrapolations take initial estimates in  $L^1(w)$. Indeed, from the beginning one could have worked with the operators $\tilde{S}$ and $\tilde{V}$ defined above.
The argument just presented shows that for every locally integrable function $f$ on $\RR^{n+1}_{+}$, if $1\le p<\infty$ and $w\in A_{p}$ then
$$
\|\tilde{S} f\|_{L^p(w)}
\le
C(p,w)\,
\|\tilde{V} f\|_{L^p(w)},
$$
and if $0<p\le 1$ and $w\in RH_{(1/p)'}$, then
$$
\|\tilde{V} f\|_{L^p(w)}
\le
C(p,w)\,
\|\tilde{S} f\|_{L^p(w)}.
$$

\section{Square functions for typical functions of $L$}

Consider the operator $L$ defined in the Introduction. We introduce the following conical and vertical square functions
$$
{\calG}_{P}(f)(x)=\left(\iint_{|x-y| <  t}  |t\nabla_{y,t}e^{-tL^{1/2}}f(y)|^2 \frac
{dydt}{{t}^{n+1}}\right)^{1/2},
$$
$$
{G}_{P}(f)(y)=\left(\int_{t>0}  |t\nabla_{y,t}e^{-tL^{1/2}}f(y)|^2 \frac
{dt}{{t}{}}\right)^{1/2},
$$
$$
{\calG}_{h}(f)(x)=\left(\iint_{|x-y| <  \sqrt t}  |\nabla_{y}e^{-tL}f(y)|^2 \frac
{dydt}{{t }^{n/2}}\right)^{1/2},
$$
$$
{G}_{h}(f)(y)=\left(\int_{ t>0}  |\nabla_{y}e^{-tL}f(y)|^2
{dt}{{}{}}\right)^{1/2},
$$

The $P$ subscript refers to the fact that we are dealing with the Poisson semigroup $e^{-tL^{1/2}}$ for $L$. The $h$ subscript refers to the heat semigroup $e^{-tL}$. The curly letters are for the conical square functions and the capital letters for the vertical ones. So from our general observations we know that $\|{\calG}_{P}(f)\|_{p} \lesssim \|{G}_{P}(f)\|_{p}$ for $2\le p <\infty$ and
$\|{G}_{P}(f)\|_{p} \lesssim \|{\calG}_{P}(f)\|_{p} $ for $0<p\le 2$ and similarly for the heat versions by making a change of variables in $t$.
Note that these square functions all contain a spatial gradient. Hence we are not working within the functional calculus of $L$.

We want to compare the $L^p$ norms of each square functions with the $L^p$ norm of the original function $f$.

For $p=2$, a mere integration  by parts  (see \cite{A})  yields that
$$
\|G_{P}(f)\|_2 + \|G_{h}(f)\|_2 \approx C(\lambda, \Lambda) \|f\|_2.
$$
As seen above, conical square functions behave as the vertical ones in $L^2$.

We turn to a summary of results on $L^p$. Let $p^*=\frac{np}{n-p}$ if $p<n$ and $\infty$ otherwise.
Let us remind the reader that the exponents $p_\pm(L),\,q_\pm(L)$ were defined in the introduction,
in the discussion following the statement of Proposition \ref{propA}.

\begin{theorem}\label{newprop}
\begin{enumerate}
 \item ${G}_{h}$ is bounded on $L^p$ for $p_{-}(L)<p<q_{+}(L)$.
  \item ${\calG}_{h}$ is bounded on $L^p$ for $p_{-}(L)<p<\infty$.
 \item ${G}_{P}$ is bounded on $L^p$ for $p_{-}(L)<p<q_{+}(L)$.
   \item ${\calG}_{P}$ is bounded on $L^p$ for $p_{-}(L)<p<p_{+}(L)^*$.
\end{enumerate}

The upper bounds are optimal except maybe for ${\calG}_{P}$.  The lower bounds are all optimal.

The converse  estimates $\|f\|_{p}\lesssim \|g(f)\|_{p}$ are valid for all $p\in (1,\infty)$ and $f\in L^p\cap L^2$ and all four square functions. Hence, each defines a  new norm on $L^p$ for $p$ in the corresponding  range above.
\end{theorem}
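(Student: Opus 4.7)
The vertical statements (1) and (3) are proved in \cite{A}. The $L^2$ equivalence $\|G_h f\|_2 \approx \|G_P f\|_2 \approx \|f\|_2$ comes from a direct integration by parts, and the $L^p$ range $p_-(L)<p<q_+(L)$ is then obtained by combining the $L^p$ boundedness and the $L^p-L^q$ off-diagonal estimates of $\sqrt{t}\nabla e^{-tL}$ on $(q_-(L), q_+(L))$ with the Calder\'on-Zygmund/extrapolation scheme developed in \cite{A}.

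For the conical statements (2) and (4) in the subrange $p_-(L) < p \le 2$, the plan is to run the same extrapolation scheme at the level of the tent square function: the Coifman-Meyer-Stein averaging trick from the introduction trades the pointwise $L^p$ bound on the gradient for a tent $L^2$ bound, so that only the $L^2$--$L^2$ off-diagonal estimates for $\{e^{-tL}\}$ on the full interval $(p_-(L), p_+(L))$ are needed as input, yielding the whole range $(p_-(L), 2]$ in one stroke. The supercritical range $2 < p < \infty$ of (2) is handled by a Fefferman-Stein Carleson-measure argument: for $f \in L^\infty$ and any ball $B$, write $f = f\chi_{4B} + f\chi_{(4B)^c}$, bound the local piece using $L^2$-boundedness of $\calG_h$, and estimate the far piece by summing the Gaussian $L^2$--$L^2$ off-diagonal estimates for $\sqrt{t}\nabla e^{-tL}$ over the annuli $2^k B \setminus 2^{k-1} B$; this shows that $|\nabla_y e^{-tL} f(y)|^2\, t\, dy\, dt$ is a Carleson measure with mass $\lesssim \|f\|_\infty^2 |B|$, i.e.\ an $L^\infty \to T^\infty_2$ bound, which interpolated with $L^2$ gives (2) on $(2,\infty)$. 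Part (4) follows the same template with the Poisson gradient; the main obstacle is that its off-diagonal decay, inherited via the subordination formula from the polynomial decay of $\sqrt{t}\nabla e^{-tL}$ at the level $p_+(L)$, is only polynomial, so the geometric series over annuli converges only up to the Sobolev-improved exponent $p_+(L)^*$. Extracting this endpoint precisely via a Sobolev embedding step is the delicate part, and is the same computation underlying Proposition \ref{prop1.4} with $m=0$.

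The converse estimates for $1 < p < \infty$ and $f \in L^p \cap L^2$ follow by polarization: using an $L^2$ Calder\'on reproducing formula that pairs the square function at $L$ with its analogue at $L^*$, H\"older's inequality (in tent spaces when $g$ is conical, after the averaging trick), and the forward bounds (1)--(4) already proved for $L^*$, one deduces $\|f\|_p \lesssim \|g(f)\|_p$; the hypothesis $f \in L^p \cap L^2$ makes the pairing rigorous. For sharpness, the upper endpoint $q_+(L)$ in (1) is exactly the failure in Proposition \ref{propA}, and sharpness in (3) follows analogously (also in \cite{A}); sharpness of the lower endpoint $p_-(L)$ in (1) and (3) is the dual statement from \cite{A}. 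For (2) and (4), sharpness of $p_-(L)$ propagates from the vertical case via Proposition \ref{prop:comp}(b): a conical $L^p$ bound with $p<2$ forces the corresponding vertical $L^p$ bound, so failure of the latter below $p_-(L)$ forces failure of the former. The statement explicitly leaves open the upper endpoint in (4), consistent with the slow-decay obstruction above.
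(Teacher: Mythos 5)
Your outline of parts (1)--(3) and of the subcritical range $p_-(L)<p\le 2$ is essentially the paper's route (citation to \cite{A} for the vertical bounds, Riesz transform plus subordination for $G_P$, a Calder\'on--Zygmund/Hardy-space argument for the conical bounds below $2$, and the Fefferman--Stein Carleson-measure argument plus tent-space interpolation for $\calG_h$ above $2$). But there are two places where your proposed mechanism would not deliver the stated conclusion.

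First, for part (4) with $p>2$ you propose to ``follow the same template'' as for $\calG_h$, i.e.\ prove an $L^\infty\to T^\infty_2$ Carleson-measure estimate with polynomial rather than Gaussian off-diagonal decay, and claim the series over annuli ``converges up to $p_+(L)^*$.'' This cannot work as stated: the Fefferman--Stein scheme produces an endpoint estimate at $p=\infty$, and interpolating it against $L^2\to T^2_2$ yields the \emph{full} range $(2,\infty)$ -- there is no way for an intermediate exponent such as $p_+(L)^*$ to emerge from that interpolation. Either the Carleson estimate holds (and you would get all of $(2,\infty)$, which is not what is claimed) or the annular series diverges for $L^\infty$ data (which is what actually happens when the decay is slow), and then the template gives nothing above some finite $p$. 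The paper explicitly notes that the Fefferman--Stein argument ``does not seem to adapt'' here. Its actual mechanism is different: Lemma \ref{lem:Caccio} (a Caccioppoli-type inequality) reduces $\calG_P$ to conical square functions of $(t^2L)^m e^{-t^2L}f$ plus a term built on $\varphi(t^2L)$ with $\varphi$ vanishing to order $1/2$ at the origin; the latter is handled by an analytic family $T_\alpha f=(\varphi_\alpha(t^2L)f)_{t>0}$ mapping $L^p\to T^p_2$, with one endpoint coming from the vertical domination (valid for $2\le p<p_+(L)$) and the other from a Carleson estimate valid only when $\Re\alpha>n/(2p_+(L))$, and Stein complex interpolation in $\alpha$ (via the Harboure--Torrea--Viviani embedding of tent spaces) produces exactly the exponent $np_+(L)/(n-2p_+(L)\Re\alpha)$, i.e.\ $p_+(L)^*$ for $\alpha=1/2$. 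Without this (or an equivalent device) the upper bound in (4) is not established.

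Second, your converse argument pairs the square function of $L$ with ``its analogue at $L^*$'' and invokes the forward bounds for $L^*$. That only controls the dual factor for $p'$ in the restricted range attached to $L^*$, so it proves $\|f\|_p\lesssim\|g(f)\|_p$ only for $q_+(L^*)'<p<p_+(L)$ (or similar), not for all $p\in(1,\infty)$ as the theorem asserts. The paper instead pairs $e^{-tL}f$ (resp.\ $e^{-tL^{1/2}}f$) with $e^{t\Delta}g$ (resp.\ $e^{-t(-\Delta)^{1/2}}g$): since the square functions of the Laplacian are bounded on every $L^{p'}$, $1<p'<\infty$, the polarization identity
\begin{equation*}
\int_{\RR^n} f\,\ol g = \iint_{\RR^{n}\times(0,\infty)} (A+I)\,\nabla e^{-tL}f\cdot\ol{\nabla e^{t\Delta}g}\,dx\,dt
\end{equation*}
yields the converse on the whole interval $(1,\infty)$. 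You should replace $L^*$ by $-\Delta$ in the pairing (and, for the Poisson versions, integrate by parts twice in $t$ to produce the weight $t\,dt/t$).
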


Fix $\mu\in (0,\pi/2)$ and $1\le p\le \infty$. We say, following \cite{A}, that  a family of linear operators $(T_{z})_{z\in \Sigma_{\mu}}$ satisfies $L^p-L^q$ off-diagonal estimates if there exist constants $c,C$ such that for all $z\in \Sigma_{\mu}:=\{z\in \CC^*; |\arg z|<\mu\}$,  all Borel sets $E$, $F$ and all $f\in L^p(E)$, we have
\begin{equation}\label{off-heat}
\|T_{z} (f\,\chi_E)\|_{L^q(F)}
\le
C\, {|z|^{-\frac n 2\,(\frac1p-\frac1q)}} e^{- \tfrac{cd(E,F)^2}{|z|}}\,\|f\|_{L^p(E)}.
\end{equation}

This holds for $\mu<\pi/2-\omega$ with $\omega$ the type of $L$, $T_{z}= (zL)^me^{-zL}$ with  $p_-(L)<p\le q<p_+(L)$  and
$T_{z}=|z|^{1/2}\nabla(zL)^me^{-zL}$ with $q_{-}(L)=p_-(L)<p\le q <q_{+}(L)$, for any non-negative integer $m $. See \cite[Chapter 3]{A}.

For $1\le p <\infty$, we recall that the tent space $T^p_{2}$ denotes the space of locally square integrable functions in $\RR^{n+1}_{+}$ such that $Sf \in L^p(\RR^n)$ with the notation of Section \ref{sec:CvsV}. The norm in $T^p_{2}$ is given by $\|Sf\|_{p}$ as defined in Section \ref{sec:CvsV}. Note that changing the aperture of cones yields equivalent norms. For $p=\infty$, we let $T^\infty_{2}$ be the space of  locally square integrable functions in $\RR^{n+1}_{+}$ such that
$$
\|f\|_{T^\infty_{2}}=\sup_{B}\left( \frac {1}{|B|}\iint_{B\times (0,r_{B})}  |f(y,t)|^2 \,\frac
{dydt}{{t}}\right)^{1/2}<\infty,
$$
the supremum being taken above all balls and  $r_{B}$ denotes  the radius of $B$.
The spaces $T^p_{2}$, $1\le p\le \infty$, form a complex interpolation family.
For more see \cite{CMS}.
Note that the $L^p$ boundedness of a conical square function reformulates canonically as an
$L^p$ to $T^p_{2}$ boundedness.

We first prove boundedness and sharpness for each square function. We consider next the converse inequalities globally.

\subsection{Proof of Theorem   \ref{newprop} for ${G}_{h}$} This  was treated in \cite{A}. There the range of $p$  is shown to be the largest possible open set.

\subsection{Proof of Theorem  \ref{newprop} for ${\calG}_{h}$} For $p\le 2$, it is in \cite{A}. For
$p=\infty$, we first obtain the boundedness of $f \mapsto t\nabla e^{-t^2L}f$ from $L^\infty$ to $T^\infty_{2}$ by a well-known argument of Fefferman-Stein \cite{FS}.
More precisely, we fix a ball $B$ and  write $f=f_{\rm loc}+f_{\rm glob}$ where $f_{\rm loc}=f\,\chi_{4\,B}$. Using the $L^2$ boundedness of ${\calG}_{h}$,
$$
\frac1{|B|}\,\iint_{\hat B} |t\nabla e^{-t^2L} f_{\rm loc}(x)|^2\,\frac{dx\,dt}{t}
\lesssim
\frac1{|B|}\|{G}_{h}f_{\rm loc}\|_{2}^2\\
\lesssim
\,\frac1{|B|}\|f_{\rm loc}\|_{2}^2\
\lesssim \|f\|_{\infty}^2.
$$
Next, the off-diagonal decay \eqref{off-heat} with $p=q=2$ for  $t^{1/2}\nabla e^{-tL}$ implies for some $0<c, C<\infty$,
$$
\frac1{|B|}\,\int_{B} |t\nabla e^{-t^2L} f_{\rm glob}(x)|^2\, {dx}
\le
 C \sum_{j\ge 2}e^{-\frac{c4^j r_B^{2}}{t^{2}}}\,\aver{2^{j+1}\,B}|f(x)|^2\,dx
 $$
which, integrated against $dt/t$ in $t\in (0, r_{B})$, yields a bound by $\|f\|_{\infty}^2.$

Then interpolate this estimate  with the boundedness from $L^2$ to $T^2_{2}$, to get boundedness from $L^p$ to $T^p_{2}$, which is the same as the $L^p$ boundedness of ${\calG}_{h}$ by rescaling $t^2\mapsto t$ in the integrals.

Note that compared to \cite{A}, the upper bound improves from $q_{+}(L)$ to $\infty$ and is of course optimal. As for the lower bounds, we have $\|G_{h}f\|_{p}\lesssim \|\calG_{h}f\|_{p}$ when $p\le 2$. Hence the fact that $p_{-}(L)$ is optimal for $G_{h}$ (see \cite{A}) implies the same for $\calG_{h}$.

\subsection{Proof of Theorem   \ref{newprop} for ${G}_{P}$} We begin with removing the $\nabla $ part in $G_{P}$ when $q_{-}(L)<p<q_{+}(L)$. We know that $\nabla L^{-1/2}$ is bounded on $L^p$ for $p$ in this range \cite{A}. So by vector-valued (in the Hilbert space $H=L^2(\RR^+, dt/t)$) extension (\cite[Proposition 4.5.9]{Gra}), we have that the
$$
\|G_{P}f\|_{p} \le C \| g_{P}f\|_{p}, \quad q_{-}(L) <p<q_{+}(L)
$$
with
$$g_P(f)(x)=\left(\int_{0}^\infty  |tL^{1/2}e^{-tL^{1/2}}f(x)|^2 \frac
{dt}{{t}}\right)^{1/2}.
$$
Next, \cite[Lemma 7.2]{HM}  using the subordination formula
\begin{equation}\label{subordination}
e^{-t\,L^{1/2}} f
=
C\,\int_0^\infty \frac{e^{-s}}{\sqrt{s}}\, e^{-\frac{t^2\,L}{4\,s}}f\,ds,
\end{equation}
 proves the pointwise  inequality $g_{P} \le C \tilde g_{h}$ with
$$\tilde g_h(f)(x)=\left(\int_{0}^\infty  |t^2Le^{-t^2L}f(x)|^2 \frac
{dt}{{t}}\right)^{1/2}.
$$
 and the latter is bounded on
$L^p$ for $p_{-}(L) <p<p_{+}(L)$.  This can be proved by adapting line by line  \cite[Theorem 6.1]{A}. This also follows from Le Merdy's theorem \cite[Theorem 3]{LeM}.

We conclude by noticing that $p_{-}(L)=q_{-}(L), q_{+}(L) < p_{+}(L)$ (when $q_{+}(L)<\infty$). This finishes the proof.

That the bounds $p_{-}(L)$ and $q_{+}(L)$ are sharp  follows by the same argument as for Step 7 in \cite[Theorem 6.1]{A}.

\subsection{Proof of Theorem  \ref{newprop} for ${\calG}_{P}$}\label{ss3.4}
More generally, we shall discuss here the proof of Proposition \ref{prop1.4}, for which Theorem
\ref{newprop} (4) represents the case $m=0$.  We shall treat the case $m>0$ explicitly only
when $p>2$,  as the cases $m=0$ and $m>0$ may be treated by the same argument
when $p\leq 2$.

This part of the proof of Theorem \ref{newprop} (and more generally, the proof of
Proposition \ref{prop1.4}) is the most involved as it does not follow from other known arguments in a simple way.  For $p=2$, this is classical integration by parts. We then present arguments for $p_{-}(L)<p<2$ and $2<p<p_{+}(L)^*$.
That $p_{-}(L)$ is sharp follows by the same argument as for $\calG_{h}$.

\subsubsection{$p< 2$} We present two proofs. The first one uses the recent theory of Hardy spaces adapted to $L$ from \cite{HM} or from \cite{BZ} and the second one adapts arguments in \cite{A} to prove instead weak-type bounds.  We remark as above
that the same proofs apply to prove \eqref{eq:m}. We shall omit details and stick to $m=0$.

\smallskip

\begin{proof}[Proof 1]
Consider the Hardy spaces $H^p_{L}$  defined in \cite{HM} for $p=1$ and \cite{HMMc}  for $p\ge 1$.  The  $ H^1_{L} \to L^1$ boundedness of ${\calG}_{P}$ is exactly  \cite[Theorem 5.6]{HM}. Then interpolation (see \cite[Lemma 4.24]{HMMc}) with the $p=2$ case, shows $ H^p_{L} \to L^p$ boundedness of ${\calG}_{P}$. Finally,  identification of $H^p_{L}$ with $L^p$ if and only if $p_{-}(L)<p<p_{+}(L)$ proved in \cite[Proposition 9.1]{HMMc} concludes the argument.

We mention that one can also use the abstract  Hardy spaces developed by Bernicot and Zhao in \cite{BZ} and the interpolation further developed in \cite{B1}. Namely   it suffices to prove an $L^1$ estimate on some abstract atoms (that is an $H^1_{F,ato}$ to $L^1$ estimate with $H^1_{F,ato}$ as in Section 3.3 of \cite{BZ}) and then interpolate.  By checking details and values (left to readers) from the clear presentation in \cite{B2}, one  exactly finds the range  for $L^p$ boundedness when $p<2$. This theory, compared to the \cite{HM} theory, has the advantage of not caring much about the ``right'' definition of the Hardy spaces as this is not needed for the purpose of interpolation.
\end{proof}

\begin{proof}[Proof 2]
We proceed as in \cite[p. 61]{A}. We need to adapt the proof of \cite[Theorem 1.1]{A} to the present situation. We take $A_r=I-(I-e^{-r^2\,L})^N$ with $N\ge 1$  an integer to be chosen and follow the proof of that result with $T=\calG_{P}$ and $p_-(L)<p<2$. As $\calG_{P}$ is bounded on $L^2$ and $A_r$ satisfies off-diagonal estimates in the range $(p_-(L),2]$ it suffices to show that
\begin{equation}\label{eqn:term-CZ}
I=\Big|\Big\{x\in\RR^n\setminus \cup_i 4\,Q_i: \calG_P\Big(\sum_i h_i\Big)(x)>\alpha/3\Big\}\Big|
\le
\frac{C}{\alpha^p}\,\int_{\RR^n} |f(x)|^p\,dx
\end{equation}
where $h_i=(I-A_{r_i})b_i$ and $r_i$ is the sidelength of the cube $Q_i$ given by the Calder\'on-Zygmund lemma \cite[Lemma 1.3]{A}.
We use Chebichev and Fubini
\begin{align*}
I
&\le
\frac{9}{\alpha^2}\,\int_{\RR^n\setminus \cup_i 4\,Q_i} \calG_P\Big(\sum_i h_i\Big)(x)^2\,dx
\\
&=
\frac{9}{\alpha^2}\,\int_{\RR^n\setminus \cup_i 4\,Q_i}
\iint_{|x-y|<t} \Big(\sum_i |t\,\nabla_{y,t} e^{-t\,L^{1/2}} h_i(y)|\Big)^2\,\frac{dy\,dt}{t^{n+1}}
\,dx
\\
&=
\frac{9}{\alpha^2}\,\iint_{\RR^{n+1}_+} \Big(\sum_i |t\,\nabla_{y,t} e^{-t\,L^{1/2}} h_i(y)|\Big)^2 \,\frac{|B(y,t)\setminus \cup_i 4\,Q_i|}{t^n}\frac{dy\,dt}{t}
\\
&\lesssim
\frac{1}{\alpha^2}\,\iint_{\RR^{n+1}_+} \Big(\sum_i \chi_{2\,Q_i}(y)\,|t\,\nabla_{y,t} e^{-t\,L^{1/2}} h_i(y)|\Big)^2 \,\frac{|B(y,t)\setminus \cup_i 4\,Q_i|}{t^n}\frac{dy\,dt}{t}
\\
&
\
+
\frac{1}{\alpha^2}\,\iint_{\RR^{n+1}_+} \Big(\sum_i \chi_{\RR^n\setminus 2\,Q_i}(y)\,|t\,\nabla_{y,t} e^{-t\,L^{1/2}} h_i(y)|\Big)^2 \,\frac{|B(y,t)\setminus \cup_i 4\,Q_i|}{t^n}\frac{dy\,dt}{t}
\\
&
=\frac{1}{\alpha^2} (I_{\rm loc}+I_{\rm glob}).
\end{align*}

We estimate $I_{\rm loc}$. Notice that since $y\in 2\,Q_i$ we have that $B(y,t)\subset 4\,Q_i$ for $t\le c\,r_i$. Then, using that the collection $\{2\,Q_i\}_i$ has bounded overlapping we obtain
\begin{align*}
I_{\rm loc}
&
\lesssim
\int_{\RR^n} \int_{c\,r_i}^{\infty} \Big(\sum_i \chi_{2\,Q_i}(y)\,|t\,\nabla_{y,t} e^{-t\,L^{1/2}} h_i(y)|\Big)^2 \,\frac{dy\,dt}{t}
\\
&\lesssim
\sum_i \int_{2\,Q_i} \int_{c\,r_i}^{\infty} |t\,\nabla_{y,t} e^{-t\,L^{1/2}} h_i(y)|^2 \,\frac{dy\,dt}{t}
\\
&\lesssim
\sum_i \Big(\int_{c\,r_i}^{\infty}  \int_{\RR^n} |t\,\nabla_y e^{-t\,L^{1/2}} h_i(y)|^2 \,\frac{dy\,dt}{t}
\\
&\qquad\qquad
+
\int_{c\,r_i}^{\infty}  \int_{\RR^n} |t\,L^{1/2} e^{-t\,L^{1/2}} h_i(y)|^2 \,\frac{dy\,dt}{t}\Big)
\\
&\lesssim
\sum_i \int_{c\,r_i}^{\infty}  \int_{\RR^n} |t\,L^{1/2} e^{-t\,L^{1/2}} h_i(y)|^2 \,\frac{dy\,dt}{t},
\end{align*}
where we have used the solution of the Kato conjecture \cite{AHLMcT}
to replace $\nabla_{y}$ by $L^{1/2}$. Next we use the subordination formula \eqref{subordination},
Minkowski's inequality and the change of variable $t\mapsto t':=t^2/4\,s$,
\begin{align*}
&\Big(\int_{c\,r_i}^{\infty}  \int_{\RR^n} |t\,\,L^{1/2} e^{-t\,L^{1/2}} h_i(y)|^2 \,\frac{dy\,dt}{t}\Big)^{1/2}
\\&\qquad
\lesssim
\int_0^\infty e^{-s} \Big(\int_{c\,r_i}^{\infty}  \int_{\RR^n} \frac{t^2}{4\,s}\,|L^{1/2} e^{-\frac{t^2\,L}{4\,s}}h_i(y)|^2 \,\frac{dy\,dt}{t}\Big)^{1/2}\,ds
\\&\qquad
\lesssim
\int_0^\infty e^{-s} \Big(\int_{c\,r_i^2/s}^{\infty}  \int_{\RR^n} |(t\,L)^{1/2} e^{-t\,L}h_i(y)|^2 \,\frac{dy\,dt}{t}\Big)^{1/2}\,ds.
\end{align*}
Next we take $a=\frac{n}{p}-\frac{n}{2}$ and use the square function estimate of McIntosh-Yagi based on $(t\,L)^{(a+1)/2}\,e^{-t\,L}$:
\begin{align*}
&\Big(\int_{c\,r_i}^{\infty}  \int_{\RR^n} |t\,\,L^{1/2} e^{-t\,L^{1/2}} h_i(y)|^2 \,\frac{dy\,dt}{t}\Big)^{1/2}
\\
&\qquad
\lesssim
\int_0^\infty e^{-s} \Big(\int_{c\,r_i^2/s}^{\infty}  \int_{\RR^n} |(t\,L)^{(a+1)/2}\, e^{-t\,L} L^{-a} h_i(y)|^2 \,t^{-a}\,\frac{dy\,dt}{t}\Big)^{1/2}\,ds
\\
&\qquad
\lesssim
\int_0^\infty e^{-s} \Big(\frac{s}{r_i^2}\Big)^{a/2} \,\Big(\int_{\RR^n} \int_{0}^{\infty}  |(t\,L)^{(a+1)/2}\, e^{-t\,L} (L^{-{a/2}} h_i)(y)|^2 \,\frac{dt}{t}\,dy\Big)^{1/2}\,ds
\\
&\qquad
\lesssim
r_i^{-a}\,\|L^{-a/2} h_i\|_2
\lesssim
r_i^{-a}\,\|h_i\|_p
=
r_i^{-a}\,\|(I-A_{r_i})b_i\|_p,
\end{align*}
where we have used \cite[Proposition 5.3]{A} in the last inequality. To conclude we use that $I-A_{r_i}=(I-e^{-r_i^2\,L})^N$ is uniformly bounded on $L^p$ and the CZ lemma
\begin{align*}
I_{\rm loc}
&\lesssim
\sum_i r_i^{-2\,a}\|b_i\|^2_p
\lesssim
\alpha^2\,\sum_i r_i^{-2\,a} \,|Q_i|^{2/p}
\lesssim
\alpha^2\,\sum_i |Q_i|
\\
&\lesssim
\alpha^{2-p}\,\int_{\RR^n}|f(x)|^p\,dx.
\end{align*}

Next we estimate $I_{\rm glob}$. We write $C_j(Q_i)=2^{j+1}\,Q_i\setminus 2^j\,Q_i$, $j\ge 1$.  By duality we can take a function $0\le H\in L^2(\RR^{n+1}_+, \frac{dy\,dt}{t})$ with norm $1$ such that
\begin{align*}
I_{\rm glob}^{1/2}
&
\lesssim
\Big(
\iint_{\RR^{n+1}_+} \Big(\sum_i \chi_{\RR^n\setminus 2\,Q_i}(y)\,|t\,\nabla_{y,t} e^{-t\,L^{1/2}} h_i(y)|\Big)^2\frac{dy\,dt}{t}\Big)^{1/2}
\\
&=
\sum_i\int_0^\infty\int_{\RR^n\setminus 2\,Q_i} |t\,\nabla_{y,t} e^{-t\,L^{1/2}} h_i(y)|\,H(y,t)\frac{dy\,dt}{t}
\\
&\lesssim
\sum_i\sum_{j=1}^\infty 2^{j\,n}|Q_i|\int_0^\infty\aver{C_j(Q_i)} |t\,\nabla_{y,t} e^{-t\,L^{1/2}} h_i(y)|\,H(y,t)\frac{dy\,dt}{t}
\\
&\le
\sum_i\sum_{j=1}^\infty 2^{j\,n}|Q_i|\,
\Big(\int_0^\infty\aver{C_j(Q_i)} |t\,\nabla_{y,t} e^{-t\,L^{1/2}} h_i(y)|^2\,\frac{dy\,dt}{t}\Big)^{1/2}\,
\\
&\hskip3cm\times
\Big(\int_0^\infty\aver{2^{j+1\,Q_i}} H(y,t)^2\frac{dy\,dt}{t}\Big)^{1/2}
\\
&\le
\sum_i\sum_{j=1}^\infty 2^{j\,n}|Q_i|\,I_{ij}\,
\essinf_{y\in Q_i} M\tilde{H}(y)^{1/2}
\end{align*}
where $\tilde{H}(y)=\int_0^\infty H(y,t)^2\,dt/t$. We estimate $I_{ij}$ by the subordination formula, Minkowski's inequality and the change of variable $t\mapsto t':=t^2/4\,s$,
\begin{align*}
I_{ij}
&\lesssim
\int_0^\infty e^{-s}  \Big(\int_0^\infty\aver{C_j(Q_i)} \Big|\frac{t}{\sqrt{4\,s}}\,\nabla_{y} e^{-\frac{t^2\,L}{4\,s}} h_i(y)\Big|^2\,\frac{dy\,dt}{t}\Big)^{1/2}\,ds
\\
&\qquad +
\int_0^\infty e^{-s}  \Big(\int_0^\infty\aver{C_j(Q_i)} \Big|\frac{t}{\sqrt{4\,s}}\,L^{1/2} e^{-\frac{t^2\,L}{4\,s}} h_i(y)\Big|^2\,\frac{dy\,dt}{t}\Big)^{1/2}\,ds
\\
&\lesssim
\int_0^\infty e^{-s}  \Big(\int_0^\infty\aver{C_j(Q_i)} |\sqrt{t} \,\nabla_{y} e^{-t\,L} h_i(y)\Big|^2\,\frac{dy\,dt}{t}\Big)^{1/2}\,ds
\\
&\qquad +
\int_0^\infty e^{-s}  \Big(\int_0^\infty\aver{C_j(Q_i)} \Big|(t\,L)^{1/2} e^{-t\,L} h_i(y)\Big|^2\,\frac{dy\,dt}{t}\Big)^{1/2}\,ds
\\
&\lesssim
\aver{C_j(Q_i)} G_L((I-e^{-r_i^2\,L})^N b_i)(y)^2\,dy
+
\aver{C_j(Q_i)} g_L((I-e^{-r_i^2\,L})^N b_i)(y)^2\,dy
\\
&\lesssim
2^{-j\,n/2}\,4^{-N\,j}\,2^{-j\,(\frac{n}{p}-\frac{n}{2})}\,\Big(\aver{Q_i} |b_i(y)|^p\,dy\Big)^{\frac1p}
\\
&\lesssim
2^{-j\,(2\,N +\frac{n}{p})}\,\alpha,
\end{align*}
where in the next-to-last estimate we have used \cite[pp. 55, 56]{A} and the notation there for $G_{L}, g_{L}$ (the first one is here the same as $G_{h}$) and in the last one the Calder\'on-Zygmund lemma. Choosing $N$ such that $2\,N +\frac{n}{p}-n>0$ we obtain by Kolomogorv's lemma
\begin{align*}
I_{\rm glob}^{1/2}
&
\lesssim
\alpha
\sum_{j=1}^\infty 2^{-j\,(2\,m +\frac{n}{p}-n)}\,\sum_i |Q_i|\,\essinf_{y\in Q_i} M\tilde{H}(y)^{1/2}
\\
&\lesssim
\alpha\,\int_{\cup_i Q_i}M\tilde{H}(y)^{1/2}\,dy
\\
&\lesssim
\alpha\,|\cup_i Q_i|^{1/2}\,\Big(\int_{\RR^n} \tilde{H}(y)\,dy\Big)^{1/2}
\\
&\lesssim
\alpha\,|\cup_i Q_i|^{1/2}\,\Big(\int_{\RR^n}\int_0^\infty H(y,t)^2\,\frac{dt\,dy}{t}\Big)^{1/2}
\\
&\lesssim
\Big(\alpha^{2-p}\,\int_{\RR^n} |f(x)|^p\,dx\Big)^{1/2}.
\end{align*}

Gathering the estimates we have obtained for $I_{\rm loc}$ and $I_{\rm glob}$ we conclude as desired
$$
I\lesssim \alpha^{-2}(I_{\rm loc}+I_{\rm glob})
\lesssim
\alpha^{-2}\,
\alpha^{2-p}\,\int_{\RR^n} |f(x)|^p\,dx
=
\frac{C}{\alpha^p}\,\int_{\RR^n} |f(x)|^p\,dx.
$$
\end{proof}

\subsubsection{$p>2$}  We shall prove a more general result, namely \eqref{eq:m}. Let $m$ be a non-negative integer and set
$$
{\calG}_{m,P}(f)(x)=\left(\iint_{|x-y| <  t}  |t\nabla_{y,t} \big((t^2L)^m e^{-tL^{1/2}}\big)f(y)|^2 \frac
{dydt}{{t}^{n+1}}\right)^{1/2},
$$

We begin with a series of results that are concerned with functions of $L$ in tent spaces. Then, we shall deal with $\calG_{m,P}$.

Consider the notation of \cite[p. 10]{A}. Let  $\phi$ be holomorphic in $\Sigma_{\mu}$, $\mu\in (\omega,\pi/2)$, with $|\phi(\zeta)| \le C(1+|\zeta|)^{-s}$ for some $s>0$, $C<\infty$ and  all $\zeta \in \Sigma_{\mu}$. Consider for $\alpha\in \CC$, with $\Re \alpha>0$, $\varphi_{\alpha}(\zeta)= \frac{\zeta^\alpha}{(1+\zeta)^\alpha}\phi(\zeta)$. Remark that
$$
\frac{\zeta^\alpha}{(1+\zeta)^\alpha}= (1+\zeta^{-1})^{-\alpha}
$$
and since $\zeta\in \Sigma_{\mu}$ implies $\zeta^{-1} \in \Sigma_{\mu}$ and $\arg(1+\zeta^{-1} )\in (-\mu,\mu)$, we have that
$$
\sup_{\zeta\in \Sigma_{\mu}} \left|\frac{\zeta^\alpha}{(1+\zeta)^\alpha}\right| \le e^{\mu |\Im \alpha|}.
$$

Consider the linear operator, a priori defined for $L^2$ functions and valued in $T^2_{2}$,
$$
T_{\alpha}f = (\varphi_{\alpha}(t^2L)f)_{t>0}.
$$
 In the statements below, constants $C$ are allowed to depend on the real part of $\alpha$ but not on its imaginary part.

\begin{lemma} For $\Re \alpha>0$, $T_{\alpha}$ maps $L^p\cap L^2$ to $T^p_{2}$ when $2\le p<p_{+}(L)$ with norm controlled by $Ce^{\mu |\Im \alpha|}$ for any $\mu \in (\omega,\pi/2)$.
\end{lemma}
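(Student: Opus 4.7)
The plan is to reduce the claimed conical square function estimate to a vertical one via Proposition \ref{prop:comp}(a), and then invoke the fact that $L$ has a bounded $H^\infty$ functional calculus on $L^p$ in the range $p_-(L)<p<p_+(L)$. Since $2\le p<\infty$, Proposition \ref{prop:comp}(a) immediately gives
$$
\|T_\alpha f\|_{T^p_2} = \|S(\varphi_\alpha(t^2 L)f)\|_p \le C(p,n)\,\|V(\varphi_\alpha(t^2 L)f)\|_p,
$$
so it suffices to bound the vertical version on $L^p$.

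To track the $\alpha$-dependence I use the factorization $\varphi_\alpha(\zeta) = (1+\zeta^{-1})^{-\alpha}\phi(\zeta)$ already singled out in the excerpt. The bound $\sup_{\zeta\in\Sigma_{\mu'}}|(1+\zeta^{-1})^{-\alpha}| \le e^{\mu'|\Im\alpha|}$ holds for any $\mu'\in(\omega,\pi/2)$, so
$$
\|\varphi_\alpha\|_{H^\infty(\Sigma_{\mu'})} \le e^{\mu'|\Im\alpha|}\,\|\phi\|_{H^\infty(\Sigma_{\mu'})},
$$
while $\varphi_\alpha$ vanishes at $0$ of order $\Re\alpha$ and decays at $\infty$ like $|\zeta|^{-s}$, with constants depending only on $\phi$ and $\Re\alpha$. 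Hence $\varphi_\alpha$ lies in a fixed class of functions in $H^\infty_0(\Sigma_{\mu'})$ with controlled decay exponents, whose $H^\infty$-sup norm grows at most like $e^{\mu'|\Im\alpha|}$.

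For $p\in(p_-(L),p_+(L))$, the bounded $H^\infty(\Sigma_{\mu'})$ calculus of $L$ on $L^p$ (see \cite{A}), together with Le Merdy's theorem \cite{LeM} (or a direct adaptation of the proof of \cite[Theorem 6.1]{A}), yields the $L^p$ vertical square function estimate
$$
\left\|\Big(\int_0^\infty |\psi(t^2 L)f|^2\,\frac{dt}{t}\Big)^{1/2}\right\|_p \le C_{\phi,\,\Re\alpha,\,p,\,n}\,\|\psi\|_{H^\infty(\Sigma_{\mu'})}\,\|f\|_p
$$
for every $\psi$ holomorphic on $\Sigma_{\mu'}$ with the same decay exponents as $\varphi_\alpha$. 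Applying this to $\psi=\varphi_\alpha$ and combining with the previous paragraph gives
$$
\|V(\varphi_\alpha(t^2 L)f)\|_p \le C\, e^{\mu'|\Im\alpha|}\,\|f\|_p,
$$
and choosing $\mu'=\mu$ and intersecting the resulting range with $[2,\infty)$ finishes the proof.

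The hard part is to ensure that the $L^p$ square function estimate has a constant that depends \emph{linearly} on $\|\psi\|_{H^\infty(\Sigma_{\mu'})}$ (for fixed decay exponents), so that the factor $e^{\mu|\Im\alpha|}$ propagates cleanly; in $L^2$ this is the classical McIntosh-Yagi theory, while the extension to $L^p$ in the range $(p_-(L),p_+(L))$ can be obtained either by interpolation from $L^2$ using the $L^p$-$L^2$ off-diagonal estimates for $\varphi_\alpha(t^2L)$, or by a Calder\'on-Zygmund type argument in the spirit of \cite{A}. An alternative avoiding Proposition \ref{prop:comp}(a) would be a Fefferman-Stein Carleson measure bound from $L^\infty$ to $T^\infty_2$ (using the $L^2$-$L^2$ off-diagonal decay of $\varphi_\alpha(t^2L)$), followed by tent-space complex interpolation against the $L^2\to T^2_2$ estimate coming from Plancherel and the $H^\infty$ calculus on $L^2$; this would give the full range $[2,\infty]$, and the restriction $p<p_+(L)$ in the statement would reflect only the scope of subsequent applications.
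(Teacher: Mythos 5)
Your main argument is exactly the paper's proof: reduce the conical norm to the vertical one via Proposition \ref{prop:comp}(a) (valid since $p\ge 2$), then invoke the bounded holomorphic functional calculus of $L$ on $L^p$ for $2\le p<p_+(L)$ together with Le Merdy's theorem, tracking the $\Im\alpha$-dependence through the bound $\sup_{\Sigma_\mu}|(1+\zeta^{-1})^{-\alpha}|\le e^{\mu|\Im\alpha|}$. One caveat: the ``alternative'' you sketch at the end (an $L^\infty\to T^\infty_2$ Carleson measure bound plus tent-space interpolation, claimed to give the full range $[2,\infty]$) does not work for arbitrary $\Re\alpha>0$ --- the off-diagonal sum $\sum_j 2^{-j(2\Re\alpha-n/q)}$ with $q<p_+(L)$ converges only when $2\Re\alpha>n/p_+(L)$, which is precisely the hypothesis of the next lemma in the paper; the restriction $p<p_+(L)$ here is therefore genuine for small $\Re\alpha$, not merely an artifact of the applications.
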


\begin{proof} It is enough to consider the boundedness of $T_{\alpha}$ for the vertical norm which dominates the conical one, see Proposition \ref{prop:comp}. In this case, this follows from the bounded holomorphic functional calculus on $L^p$ for $2\le p<p_{+}(L)$ combined with Le Merdy's theorem \cite[Theorem 3]{LeM}.
\end{proof}

\begin{lemma} For $\Re \alpha> \frac{n}{2p_{+}(L)}$, $T_{\alpha}$ maps $L^p$ to $T^p_{2}$ when $2\le p\le \infty$ with norm controlled by $Ce^{\mu |\Im \alpha|}$ for any $\mu \in (\omega,\pi/2)$.

\end{lemma}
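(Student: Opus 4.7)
The plan is to prove the $L^\infty\to T^\infty_2$ endpoint and then obtain the full range $2\le p\le \infty$ by complex interpolation against the $L^2\to T^2_2$ bound given by the preceding lemma (case $p=2$), using the interpolation of tent spaces from \cite{CMS}. The main content is the Carleson-measure estimate
\[
\frac{1}{|B|}\iint_{B\times(0,r_B)}|\varphi_\alpha(t^2L)f(y)|^2\,\frac{dydt}{t}\le C\,e^{2\mu|\Im\alpha|}\|f\|_\infty^2
\]
for every ball $B$ of radius $r_B$, with $C$ independent of $\Im\alpha$.

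Following the Fefferman-Stein template already used for $\calG_h$ in the proof of Theorem~\ref{newprop}, I split $f=f_{\rm loc}+f_{\rm glob}$ with $f_{\rm loc}=f\chi_{4B}$. The local contribution is dispatched by the $L^2\to T^2_2$ bound from the preceding lemma applied to $f_{\rm loc}$, using the trivial estimate $\|f_{\rm loc}\|_2^2\le|4B|\,\|f\|_\infty^2\lesssim|B|\,\|f\|_\infty^2$.

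For the global part, decompose $f_{\rm glob}=\sum_{j\ge 2}f_j$ with $f_j=f\chi_{C_j(B)}$ and $C_j(B)=2^{j+1}B\setminus 2^jB$, and invoke an off-diagonal estimate of the form
\[
\|\varphi_\alpha(t^2L)f_j\|_{L^2(B)}\lesssim e^{\mu|\Im\alpha|}\,t^{-n(1/r-1/2)}\Big(\frac{t}{2^jr_B}\Big)^{2\Re\alpha}\|f_j\|_{L^r}
\]
for any $r\in[2,p_+(L))$. Here the factor $t^{-n(1/r-1/2)}$ is the dimensional $L^r\to L^2$ scaling from the holomorphic functional calculus (which is bounded on $L^r$ for $p_-(L)<r<p_+(L)$), and the decay $(t/2^jr_B)^{2\Re\alpha}$ reflects the vanishing $\varphi_\alpha(\zeta)=O(|\zeta|^{\Re\alpha})$ at $\zeta=0$; both are derived from the resolvent off-diagonal theory of \cite[Chapter~3]{A} combined with the decomposition $\varphi_\alpha(t^2L)=[(t^2L)/(1+t^2L)]^\alpha\phi(t^2L)$, with the $e^{\mu|\Im\alpha|}$ constant coming from the bounded holomorphic calculus. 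Using $\|f_j\|_{L^r}\le C(2^jr_B)^{n/r}\|f\|_\infty$, squaring, integrating in $t\in(0,r_B)$ against $dt/t$ (the integral converges at $t=0$ once $\Re\alpha>n/(2r)-n/4$), and dividing by $|B|\simeq r_B^n$ leaves each $j$-term controlled by $C\,e^{2\mu|\Im\alpha|}\|f\|_\infty^2\cdot 2^{j(2n/r-4\Re\alpha)}$. The geometric sum over $j\ge 2$ converges iff $\Re\alpha>n/(2r)$; this is the binding condition. Letting $r\nearrow p_+(L)$ produces exactly $\Re\alpha>n/(2p_+(L))$, the hypothesis of the lemma.

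The main obstacle is establishing the off-diagonal bound displayed above with the correct joint dependence on the dimensional scaling $t^{-n(1/r-1/2)}$ and on the polynomial rate $(t/d)^{2\Re\alpha}$, together with a controlled $e^{\mu|\Im\alpha|}$ factor. This amounts to combining the $L^r$-boundedness of the holomorphic functional calculus for $p_-(L)<r<p_+(L)$ with the resolvent off-diagonal estimates from \cite[Chapter~3]{A}, applied via a contour-integral representation of $\varphi_\alpha(t^2L)$ that isolates the two decay mechanisms. Once this is in hand, the remainder is the standard Carleson-measure/Fefferman-Stein bookkeeping described above, and interpolation with the $L^2$-endpoint delivers the full range $2\le p\le \infty$.
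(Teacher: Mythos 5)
Your proposal follows essentially the same route as the paper's proof: reduce to the $L^\infty\to T^\infty_{2}$ Carleson estimate, split $f$ into $f\chi_{4B}$ plus annular pieces $f_j=f\chi_{C_j(B)}$, dispatch the local piece by the $L^2$ bound, extract the polynomial decay $(t/2^jr_B)^{2\Re\alpha}$ from the order of vanishing of $\varphi_\alpha$ at the origin combined with off-diagonal estimates for the semigroup in the range up to $p_+(L)$, and then complex-interpolate in the tent-space scale against the $p=2$ case. The one point that needs repair is the displayed off-diagonal bound, which you rightly identify as the crux but leave unproven, and which is not in the form the theory actually delivers: an $L^r\to L^2$ estimate with $r>2$ and prefactor $t^{-n(1/r-1/2)}$ (a \emph{positive} power of $t$, giving a spurious gain as $t\to0$) is in the non-improving direction, and the dimensional factor that genuinely appears is a power of $2^jr_B$ rather than of $t$. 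The paper instead uses the standard improving $L^2(C_j(B))\to L^q(B)$ estimates for $e^{-zL}$ with $2<q<p_+(L)$, fed through the contour representation of $\varphi_\alpha(t^2L)$ with the kernel bound $|\eta_{\pm,t}(z)|\le C\,t^{2\Re\alpha}|z|^{-\Re\alpha-1}e^{\nu|\Im\alpha|}$; the $z$-integral localizes at $|z|\approx(2^jr_B)^2$, producing the factor $2^{-j(2\Re\alpha-n/q)}$ of \eqref{varphi} and hence the same threshold $\Re\alpha>n/(2p_+(L))$. Since your bookkeeping lands on the correct condition and a correct version of the annular estimate is available by exactly the mechanism you describe, this is an imprecision in the key intermediate inequality rather than a wrong approach, but as written that inequality would not survive scrutiny and should be replaced by the $L^2\to L^q$ form.
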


\begin{proof} For fixed $\alpha$ it is enough to consider the case $p=\infty$ as one can then complex interpolate from \cite{CMS} between $T^2_{2}$ and $T^\infty_{2}$.
  We claim that for any $2< q<p_{+}(L), $ and any ball $B$,
\begin{multline}\label{varphi}
\left(
\frac1{|B|}\,\iint_{B\times (0,r_{B})} |\varphi_{\alpha}(t^2L) f(x)|^2\,\frac{dx\,dt}{t}
\right)^{1/2}
\\
\le
Ce^{\mu|\Im \alpha|}\,\sum_{j=1}^\infty 2^{-j\,(2\Re\alpha-n/q)}\,\left(\aver{2^j\,B}|f(x)|^2\,dx\right)^{1/2}.
\end{multline}
  We postpone the proof of the claim until the end of this subsection. Now the right hand side is dominated by the $L^\infty$ norm of $f$ by using $\Re \alpha> \frac{n}{2p_{+}(L)}$ and choosing $q<p_{+}(L)$ appropriately. Then the supremum over all $B$ of the left hand side is precisely the $T^\infty_{2}$ norm of $T_{\alpha}f$.
\end{proof}

\begin{lemma} For $0<\Re \alpha \le  \frac{n}{2p_{+}(L)}$, $T_{\alpha}$ maps $L^p$ to $T^p_{2}$ when $2\le p< \frac{np_{+}}{n-2p_{+}\Re \alpha}.$
\end{lemma}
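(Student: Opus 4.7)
The natural strategy is to interpolate between the two preceding lemmas by means of Stein's analytic interpolation theorem applied to the family $\{T_{\alpha}\}$ as a function of the complex parameter $\alpha$. Fix $\alpha_{0}$ with $0<\Re\alpha_{0}\le\frac{n}{2p_{+}(L)}$ and $p_{0}$ with $2\le p_{0}<\frac{np_{+}(L)}{n-2p_{+}(L)\Re\alpha_{0}}$. Choose a small $\ep>0$ and an exponent $p_{1}$ with $2\le p_{1}<p_{+}(L)$, both to be fixed at the end, and set $\alpha_{1}=\ep$, $\alpha_{2}=\frac{n}{2p_{+}(L)}+\ep$. On the closed strip $S=\{z\in\CC:0\le\Re z\le 1\}$ define
$$
\alpha(z)=(1-z)\alpha_{1}+z\alpha_{2}+i\,\Im\alpha_{0},
$$
and consider the analytic family of operators $S_{z}=T_{\alpha(z)}$, $z\in S$. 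Analyticity in $z$ is inherited from the analyticity of $\alpha\mapsto\varphi_{\alpha}(\zeta)=\zeta^{\alpha}(1+\zeta)^{-\alpha}\phi(\zeta)$ together with the bounded $H^{\infty}$ functional calculus on $L^{2}$.

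On the left boundary $\Re z=0$ we have $\Re\alpha(it)=\ep>0$, and the first lemma applies to give $S_{it}:L^{p_{1}}\cap L^{2}\to T_{2}^{p_{1}}$ with norm bounded by $Ce^{\mu|\Im\alpha(it)|}$; since $\Im\alpha(it)=\Im\alpha_{0}+t(\alpha_{2}-\alpha_{1})$, this is a constant times $e^{\mu(\alpha_{2}-\alpha_{1})|t|}$, i.e.\ admissible exponential growth. On the right boundary $\Re z=1$ we have $\Re\alpha(1+it)=\alpha_{2}>\frac{n}{2p_{+}(L)}$, so the second lemma yields $S_{1+it}:L^{\infty}\to T_{2}^{\infty}$ with the same type of admissible growth. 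Since the spaces $T_{2}^{p}$, $1\le p\le\infty$, form a complex interpolation family by \cite{CMS}, Stein's theorem produces, at every $\theta\in(0,1)$, boundedness $S_{\theta}:L^{p_{\theta}}\to T_{2}^{p_{\theta}}$ with
$$
\frac{1}{p_{\theta}}=\frac{1-\theta}{p_{1}}+\frac{\theta}{\infty}=\frac{1-\theta}{p_{1}}.
$$

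It remains to select $\ep,p_{1},\theta$ so that $\alpha(\theta)=\alpha_{0}$ and $p_{\theta}=p_{0}$. The first condition gives $\theta=\frac{\Re\alpha_{0}-\ep}{\alpha_{2}-\alpha_{1}}=\frac{2p_{+}(L)(\Re\alpha_{0}-\ep)}{n}\in(0,1)$ for $\ep$ small. Sending $\ep\to 0^{+}$ and $p_{1}\to p_{+}(L)^{-}$,
$$
\frac{1-\theta}{p_{1}}\;\longrightarrow\;\frac{1-\frac{2p_{+}(L)\Re\alpha_{0}}{n}}{p_{+}(L)}=\frac{n-2p_{+}(L)\Re\alpha_{0}}{np_{+}(L)},
$$
so every $p_{0}$ strictly less than $\frac{np_{+}(L)}{n-2p_{+}(L)\Re\alpha_{0}}$ is attained by an appropriate choice of $\ep$ and $p_{1}$. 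This yields the desired $L^{p_{0}}\to T_{2}^{p_{0}}$ bound for $T_{\alpha_{0}}=S_{\theta}$.

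The main obstacle is the verification of the hypotheses of Stein's theorem: that $\{S_{z}\}$ genuinely forms an admissible analytic family of operators on the pair $(L^{p},T_{2}^{p})$, including the exponential control $e^{\mu|\Im\alpha|}$ on the boundary norms (which is precisely what the two lemmas supply, with $\mu<\pi/2$), and that complex interpolation of $T_{2}^{p}$ is compatible with complex interpolation of $L^{p}$ on the source side. Both points are standard: the former is the content of the uniform bounds in the two lemmas, and the latter is the interpolation theorem of Coifman--Meyer--Stein cited in the paper.
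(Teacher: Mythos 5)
Your strategy is exactly the paper's: run Stein's analytic interpolation on the family $\alpha\mapsto T_\alpha$ between the two preceding lemmas, using the $Ce^{\mu|\Im\alpha|}$ control on the boundary lines, and your bookkeeping (the affine path $\alpha(z)$, the choice $\theta=\tfrac{2p_+(L)(\Re\alpha_0-\ep)}{n}$, and the limit $\ep\to0^+$, $p_1\to p_+(L)^-$ sweeping out all $p_0<\tfrac{np_+(L)}{n-2p_+(L)\Re\alpha_0}$) is correct.

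The one genuine soft spot is the step you defer to the end and declare ``standard'': the classical Stein theorem (e.g.\ \cite[Theorem 1.3.7]{Gra}) is stated for analytic families of operators \emph{between Lebesgue spaces}, and the fact that $\{T^p_2\}$ is a complex interpolation scale in the sense of \cite{CMS} does not by itself license applying that theorem with target $T^p_2$; one would need an abstract Stein interpolation theorem for operators valued in an interpolation couple, which is a different (and less elementary) statement. The paper closes exactly this gap with a concrete device: by Harboure--Torrea--Viviani \cite{HTV} there is a linear map $\iota$ which, for all $1<p<\infty$, is an isometry of $T^p_2$ onto a closed subspace of $L^p_H$ with $H=L^2(\RR^{n+1}_+,\tfrac{dydt}{t^{n+1}})$. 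One then interpolates the family $\iota\circ T_\alpha\colon L^p\to L^p_H$, which is within reach of the classical theorem after reducing to scalar-valued functions by pairing against a fixed $h\in H$. You should either insert this embedding or explicitly invoke an abstract Stein interpolation theorem for couples; as written, the appeal to \cite{CMS} alone does not justify the application. Everything else in your argument (analyticity of $z\mapsto S_zf$, admissible exponential growth on the boundary, density/a priori definition on $L^2$, the endpoint case $\Re\alpha_0=\tfrac{n}{2p_+(L)}$) is fine.
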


\begin{proof}  By a result of Harboure, Torrea, Viviani \cite{HTV},  there is a  linear map $\iota$ which for all $1<p<\infty$ is an isometry from $T^p_{2}$ to a closed subspace of $L^p_{H}$ where $H=L^2(\RR^{n+1}_{+},\frac{dydt}{t^{n+1}})$.  Thus, the maps $\iota \circ T_{\alpha}$ form an analytic family of  linear operators  and they are bounded from $L^p$ to $L^p_{H}$ for $(1/p,\alpha)$ given by the two above lemmas. Stein's complex interpolation theorem (see \cite[Theorem 1.3.7]{Gra}), extended to $H$-valued functions (use the linear $\CC$-valued maps
$f\mapsto \langle \iota \circ T_{\alpha}(f), h \rangle$ for any fixed $h\in H$),   applies since the growth is controlled in  $\Im \alpha$ and gives the desired range of $p$ in terms of $\Re \alpha$.
\end{proof}

We can use the above combined with the following lemma whose proof is postponed to Section \ref{sec:Caccio}

\begin{lemma}\label{lem:Caccio} Let $m$ be a non-negative number. For $C$ depending only on ellipticity and dimension, for any  function $f\in L^2$ 	and any $x\in \RR^n$,
\begin{align*}
{\calG_{m,P}}(f)(x)
&\le
mC \left( \iint_{|x-y| <  2t}  \big|\big((t^2L)^m e^{-t^2 L}f\big)(y)\big|^2 \frac{dydt}{{t\, }^{n+1}}\right)^{1/2}
\\
&\qquad\qquad
+ C\left( \iint_{|x-y| <  2t}  \big|\nabla_{y,t}\big((t^2L)^m e^{-t^2L}f\big)(y)\big|^2 \frac
{dydt}{{t\, }^{n-1}}\right)^{1/2}
\\
&\qquad + C\left( \iint_{|x-y| < 2t}  \big|\big((t^2L)^m(e^{-tL^{1/2}}f(y)-e^{-t^2 L}f\big)(y)\big|^2 \frac
{dydt}{{t\, }^{n+1}}\right)^{1/2}.
\end{align*}
\end{lemma}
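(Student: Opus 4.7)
The plan hinges on the observation that $u(y,t):=L^m e^{-tL^{1/2}}f(y)$, viewed as a function on $\RR^{n+1}_+$, is a weak solution of the elliptic equation $-\partial_t^2 u + L u = 0$: indeed the Poisson semigroup satisfies $\partial_t^2 e^{-tL^{1/2}}f = L\, e^{-tL^{1/2}}f$, and $L^m$ commutes with $e^{-tL^{1/2}}$. Equivalently, $u$ solves $\tilde L u = 0$ for the divergence-form operator $\tilde L:=-\dv_{(y,t)}\tilde A\nabla_{(y,t)}$ with block-diagonal matrix $\tilde A = \mathrm{diag}(1,A)$. As a weak solution of an elliptic equation, $u$ enjoys the Caccioppoli inequality---hence the name of the lemma.

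Writing $(t^2L)^m e^{-tL^{1/2}}f = t^{2m} u$, the product rule gives the pointwise bound
\[
|t\nabla_{y,t}(t^{2m}u)|^2 \leq C\, m^2\, t^{4m}|u|^2 + C\, t^{4m+2}|\nabla_{y,t}u|^2.
\]
Setting $v(y,t):=L^m e^{-t^2L}f(y)$, one has $B:=(t^2L)^m e^{-t^2L}f = t^{2m}v$ and $D:=(t^2L)^m(e^{-tL^{1/2}}f - e^{-t^2L}f) = t^{2m}(u-v)$. The first summand equals $Cm^2|B+D|^2 \leq 2Cm^2(|B|^2 + |D|^2)$, whose cone integrals against $dydt/t^{n+1}$ produce exactly the first term of the conclusion (with the coefficient $m$) together with a contribution to the third term.

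For the second summand, I would invoke the Caccioppoli inequality for $u$: on any ball $B_r(y_0,t_0)\subset \RR^{n+1}_+$ and for any constant $c$,
\[
\iint_{B_r(y_0,t_0)}|\nabla_{y,t}u|^2\,dy\,dt \leq \frac{C}{r^2}\iint_{B_{2r}(y_0,t_0)}|u-c|^2\,dy\,dt.
\]
The key choice is $c:=|B_{2r}|^{-1}\iint_{B_{2r}}v$. Splitting $|u-c|^2\leq 2|u-v|^2 + 2|v-c|^2$ and invoking the $L^2$ Poincar\'e inequality for $v$ on $B_{2r}$ (so that $\iint|v-c|^2 \leq Cr^2\iint|\nabla_{y,t}v|^2$) yields
\[
\iint_{B_r}|\nabla_{y,t}u|^2 \lesssim r^{-2}\iint_{B_{2r}}|u-v|^2 + \iint_{B_{2r}}|\nabla_{y,t}v|^2.
\]

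Covering the cone $|x-y|<t$ by Whitney balls of radius $r\sim t_0$ inside $\RR^{n+1}_+$, applying this estimate on each, multiplying by the weight $t_0^{4m+2-n-1}$, and summing (equivalently, a direct Fubini argument which enlarges the cone from $t$ to $2t$) produces
\[
\iint_{|x-y|<t} t^{4m+2}|\nabla_{y,t}u|^2 \frac{dydt}{t^{n+1}} \lesssim \iint_{|x-y|<2t}\bigl(t^{4m}|u-v|^2 + t^{4m+2}|\nabla_{y,t}v|^2\bigr)\frac{dydt}{t^{n+1}}.
\]
Now $t^{4m}|u-v|^2 = |D|^2$ feeds the third term of the conclusion, and a reverse product-rule computation on $B=t^{2m}v$ gives $t^{4m+2}|\nabla_{y,t}v|^2 \leq 2|t\nabla_{y,t}B|^2 + 8m^2|B|^2$; integrating against $dydt/t^{n+1}$ and using the identity $|t\nabla_{y,t}B|^2\,dydt/t^{n+1} = |\nabla_{y,t}B|^2\,dydt/t^{n-1}$ yields exactly the second term of the conclusion, plus an extra $m^2|B|^2$ contribution absorbed into the first. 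The hard step will be this Caccioppoli--Whitney manipulation: selecting the reference constant $c$ tied to the heat extension $v$ so that the Poincar\'e inequality produces precisely the \emph{gradient} of $v$ (and hence the second conclusion term) while leaving the remainder equal to $|D|$, and then running the Fubini/Whitney argument that converts the local elliptic estimate into the cone-integrated form.
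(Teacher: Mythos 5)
Your proposal is correct, and it rests on the same two pillars as the paper's proof: the product rule splitting $t\nabla_{y,t}(t^{2m}u)$ into a zeroth--order piece (handled by writing $u=v+(u-v)$, yielding the first and third terms) and a gradient piece, with the latter controlled by a Caccioppoli argument in which the heat extension $v$ plays the role of the reference function. Where you differ is in how the Caccioppoli step is executed. The paper works globally on the cone: it tests the quadratic form $\iint B\nabla u\cdot\overline{\nabla u}\,\varphi^2\psi^2\,t^{4m+1-n}$ against a smooth cone cutoff, splits $\overline{\nabla u}=\overline{\nabla(u-v)}+\overline{\nabla v}$, absorbs the $\nabla v$ piece by Young's inequality, and integrates by parts in the $\nabla(u-v)$ piece using the equation for $u$, so that the correction $u-v$ appears undifferentiated against the derivative of the cutoff. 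You instead use the textbook interior Caccioppoli inequality with a \emph{constant} subtracted on Whitney balls, choose the constant to be the average of $v$, and invoke Poincar\'e for $v$ to convert $|v-c|^2$ into $r^2|\nabla v|^2$; a Whitney covering then sums the local estimates into the cone-integrated form with aperture enlarged from $1$ to $2$. Both routes land on the identical local inequality $\iint_{B_r}|\nabla u|^2\lesssim r^{-2}\iint_{B_{2r}}|u-v|^2+\iint_{B_{2r}}|\nabla v|^2$. Your version is arguably more modular (it quotes standard lemmas rather than redoing the integration by parts, and the Whitney balls sit away from $\{t=0\}$ so no truncation $\psi_{r,R}$ is needed), at the cost of introducing the Poincar\'e inequality and a covering argument; the paper's version is self-contained and produces the aperture-$2$ cones in one stroke. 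Two small points to attend to when writing it up: justify that $u=e^{-tL^{1/2}}L^mf$ is a $W^{1,2}_{\mathrm{loc}}$ weak solution of the block equation in $\RR^{n+1}_+$ (for $m\ge 1$ take $f$ in the domain of $L^m$, dense in $L^2$, as the paper does), and check that the doubled Whitney balls have bounded overlap and remain inside the aperture-$2$ cone, which holds once the Whitney radius is a small fixed fraction of $t_0$.
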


We  now conclude for ${\calG}_{P}={\calG}_{0,P}$. Start from the decomposition in the previous lemma and notice that the first term vanishes since $m=0$. The second term is bounded on $L^p$ for $2<p<\infty$ using ${\calG}_{h}$ and rescaling $t\mapsto t^{1/2}$  for the part with $\nabla_{y}$ and the same argument applies for the $\partial_{t}$ part because it picks up one more power of $L$ and one still has good decay in the $L^2-L^2$ off-diagonal estimates.

For the term with $e^{-tL^{1/2}}- e^{-t^2L}$, we apply the third lemma concerned with $T_{\alpha}$ with $\phi(\zeta)= (1+\zeta)^{1/2}\zeta^{-1/2}(e^{-\zeta^{1/2}} - e^{-\zeta})$ and $\alpha=1/2$, which gives $2\le p < \frac{np_{+}(L)}{n-p_{+}(L)}= p_{+}(L)^*$ if $p_{+ }(L)<n$ or  $2<p<\infty$ if $p_{+}(L)\ge n$.

For $m$ positive integer then the third term of the decomposition is estimated as above with $\phi(\zeta)= (1+\zeta)^{m+1/2}\zeta^{-1/2}(e^{-\zeta^{1/2}} - e^{-\zeta})$  and $\alpha=m+1/2$, which gives $2\le p < \frac{np_{+}(L)}{n-(2m+1)p_{+}(L)}$ if $(2m+1)p_{+ }(L)<n$ or  $2<p<\infty$ if $(2m+1)p_{+}(L)\ge n$.  {The first term is as good as the second one, \textit{i.e.,} bounded on $L^p$ for $2<p<\infty$.}
\qed

\begin{remark}
It seems that the order $\zeta^\alpha$ for $\varphi_{\alpha}$ at 0 governs the  $p$ range for boundedness of the conical square function. But if the decay of the off-diagonal estimate is fast enough, then this information is not necessary. For example, consider the conical square function made after $(t^2L)^me^{-t^2L}$ for $m$ a positive real number. When $m$ is an integer, they  are bounded on $L^p$  for all $2<p< \infty$ because the decay is gaussian (polynomial of some high enough degree would suffice). But when $m$ is a  non-integer,  then the decay is polynomial and our method  gives a limited range of $p$ for small $m$ unless $p_{+}(L)=\infty$. In other words, when $p_{+}(L)<\infty$, we obtain a range of $p$ that is discontinuous a function of $m$. We do not know whether this discontinuity is a reality or an artifact of our method. We ask therefore whether  ${\calG}_{m,P}$ is bounded on $L^p$ for $2<p<\infty$ and all  real $m>0$.
\end{remark}

\begin{proof}[Proof of \eqref{varphi}] 
We write $f=f_{\rm loc}+f_{\rm glob}$ where $f_{\rm loc}=f\,\chi_{4\,B}$. Then, using the $L^2$ boundedness of square functions associated with $\varphi_{\alpha}(t^2L)$,
\begin{multline*}
\frac1{|B|}\,\iint_{B\times(0,r_{B})} |\varphi_{\alpha}(t^2L) f_{\rm loc}(x)|^2\,\frac{dx\,dt}{t}
\le
\frac1{|B|}\int_{\RR^n} \Big(\int_0^\infty |\varphi_{\alpha}(t^2L) f_{\rm loc}(x)|^2\,\frac{dt}{t}\Big)\,dx
\\
\le
C\,\frac1{|B|}\int_{\RR^n} |f_{\rm loc}(x)|^2\,dx
=
C\,\aver{4\,B} |f(x)|^2\,dx.
\end{multline*}

It is then enough to show
$$
\bigg(\aver{B} |\varphi_{\alpha}(t^2L) f_{\rm glob}(x)|^2\,dx\bigg)^{1/2}
\le
 Ce^{\mu|\Im \alpha|}\,\frac{t^{2\Re\alpha}}{r_B^{2\Re \alpha}}\,
\sum_{j=2}^\infty 2^{-j\,(2\Re \alpha-n/q)}\,\left(\aver{2^{j+1}\,B}|f(x)|^2\,dx\right)^{1/2}.
$$
Indeed, plugging this estimate in the integral on the Carleson region, we obtain the claim. 

To this end,  we set $f_j=f\,\chi_{C_j(B)}$ with $C_{j}(B)=2^{j+1}B\setminus 2^j B$ so that $f_{\rm glob} = \sum_{j\ge 2} f_{j}$ and by Minkowski's and H\"older's inequalities
$$
\bigg(\aver{B} |\varphi_{\alpha}(t^2L) f_{\rm glob}(x)|^2\,dx\bigg)^{1/2} \le \sum_{j\ge 2} \bigg(\aver{B} |\varphi_{\alpha}(t^2L) f_{j}(x)|^q\,dx\bigg)^{1/q}$$ for any $q\ge 2$.
Fix $j\ge 2$ and use the representations \cite[(2.6)-(2.7)]{A} to estimate $\varphi_{\alpha}(t^2L)f_{j}$. For the
$\eta_{\pm,t}(z)$ given by \cite[(2.7)]{A} we find with $\nu \in (\omega, \mu)$,
$$|\eta_{\pm,t}(z)| \le \frac{Ct^{2\Re\alpha}}{|z|^{\Re \alpha +1} }e^{\nu|\Im \alpha|}.$$
Next, using \eqref{off-heat}  in \cite[(2.6)]{A}  for $e^{-zL}$
 with $p=2$ and $2<q<p_{+}(L)$, $E=C_{j}(B)$ and $F=B$, we easily obtain
$$
\bigg(\aver{B} |\varphi_{\alpha}(t^2L) f_{j}(x)|^q\,dx\bigg)^{1/q} \le
Ce^{\mu|\Im \alpha|}\,\frac{t^{2\Re\alpha}}{r_B^{2\Re \alpha}}\,
2^{-j\,(2\Re \alpha-n/q)}\,\left(\aver{2^{j+1}\,B}|f(x)|^2\,dx\right)^{1/2}.
$$
We see in this last estimate the combined roles of $\Re \alpha$ and $p_{+}(L)$: $\Re \alpha>0$ yields integrability in $t$ while  $2\Re \alpha-n/p_{+}(L)>0$ yields the summability  in space.
\end{proof}

\subsubsection{Converse inequalities}

We basically follow \cite[Theorem 6.1, Step 8]{A}.
What we have proved so far applies to any operator $L$ in our class, and in particular,
to $L=-\Delta$. The explicit formula for the heat kernel implies that $p_-(-\Delta)=1$ and
$p_{+}(-\Delta)=q_+(-\Delta)=\infty$.
Hence, we obtain  the well-known estimates
$$
\|G_{P, -\Delta}f\|_p+ \|G_{h, -\Delta}f\|_p+ \|\calG_{P,-\Delta}f\|_p +\|\calG_{h,-\Delta}f\|_p \lesssim \|f\|_p
$$
for all $1<p<\infty$ and $f\in L^p$, where we have adapted the notation to indicate the operator.

The converse $\|f\|_{p}\lesssim \|G_{h, L}f\|_{p}$ is based on the following  formula for $f,g\in L^2$:
\begin{align*}
\int_{\RR^n} f(x) \, \ol g(x)\, dx &= \lim_{\ep\downarrow 0} \int_{\RR^n} e^{-\ep L}f(x)\, \ol{e^{\ep\Delta}g(x)} \, dx-  \lim_{R\uparrow \infty} \int_{\RR^n} e^{-R L}f(x)\,
\ol{e^{R\Delta}g(x)}\, dx
\\
&
= - \int_0^\infty \frac d {dt} \int_{\RR^n} (e^{-t L}f)(x)\, \ol{(e^{t\Delta}g)(x)}\, dx\, dt
\\
&
= \iint_{\RR^n \times (0,\infty)} (A(x)+I) (\nabla e^{-tL}f)(x) \cdot
 \ol{(\nabla e^{t\Delta}g)(x)}\, dxdt.
\end{align*}
The last equality is
 obtained by integration by parts in the $x$ variable after computing the time
derivative. Hence, we obtain with obvious notation
$$
\left| \int_{\RR^n} f(x) \, \ol g(x)\, dx \right| \le (\|A\|_\infty +1) \int_{\RR^n} G_{h, L}(f) G_{h,-\Delta}(g),
$$
so that
$$
\left| \int_{\RR^n} f(x) \, \ol g(x)\, dx \right| \lesssim   \|G_{h,L}(f)\|_p \|g\|_{p'}
$$
and it follows
$$
\|f\|_p \lesssim \|G_{h,L}(f)\|_p.$$

For $\calG_{h,L}$ the proof is similar. Starting from the equality above, we use the averaging trick of the Introduction and then H\"older's inequality. Details are left to the reader.

For square functions based on the Poisson semigroup, the idea is the same but one needs to integrate by parts in $t$ twice:
\begin{align*}
\int_{\RR^n} f (x)\, \ol g(x)\, dx &= -\int_0^\infty \frac {d} {dt} \int_{\RR^n} (e^{-t L^{1/2}}f)(x)\, \ol{(e^{-t(-\Delta)^{1/2}}g)(x)}\, dx\, dt\\
&
=  \int_0^\infty t \frac {d^2} {dt^2} \int_{\RR^n} (e^{-t L^{1/2}}f)(x)\, \ol{(e^{-t(-\Delta)^{1/2}}g)(x)}\, dx\, dt
\\
&
= \iint_{\RR^n \times (0,\infty)} (A(x)+I) (t\nabla_{x} e^{-tL^{1/2}}f)(x) \cdot
 \ol{(t\nabla_{x} e^{-t(-\Delta)^{1/2}}g)(x)}\, \frac{dxdt}t
 \\
 & \qquad + 2 \iint_{\RR^n \times (0,\infty)}  (t\nabla_{t} e^{-tL^{1/2}}f)(x) \cdot
 \ol{(t\nabla_{t} e^{-t(-\Delta)^{1/2}}g)(x)}\,\frac{dxdt}t.
\end{align*}
The last line is obtained by distributing the second derivatives in $t$ and integrating by parts in $x$ using $\frac {d^2} {dt^2}  (e^{-t L^{1/2}}f)(x)= L(e^{-t L^{1/2}}f)(x)$ and similarly with $-\Delta$.  The two right hand terms are controlled  by both $\|G_{P, L}f\|_{p}\|G_{P,-\Delta}g\|_{p'}$ and  $\|\calG_{P, L}f\|_{p}\|\calG_{P,-\Delta}g\|_{p'}$ so that the conclusion follows as above.

\section{Proof of Lemma \ref{lem:Caccio}}\label{sec:Caccio}
If $m=0$ we take $f\in L^2$ and set $f_0=f$. If $m\ge 1$, as the domain of $L^m$ is dense in $L^2(\RR^n)$, it suffices to assume $f$ in that space and we set $f_{m}=L^mf$. Define $u_{m}=L^me^{-tL^{1/2}}f=  e^{-tL^{1/2}}f_{m}$, $v_{m}=L^me^{-t^2L}f=  e^{-t^2 L}f_{m}$. Notice that
$$
t\nabla_{y,t}(t^{2m}u_{m})
=
2m t^{2m}v_{m}  \vec{e} +2m t^{2m}(u_{m}-v_{m}) \vec{e}
+
t^{2m}(t\nabla_{y,t}u_{m})
$$
with $\vec{e}= (0, \ldots, 0, 1)$.
The first and second terms give rise respectively to the first and third terms on the right hand side of the desired inequality. Therefore it suffices to control the third term which gives a square function that
is pointwise smaller than the integral
$$
I(x)= \iint  |\nabla_{y,t}u_m (y,t)|^2 \varphi^2\bigg( \frac{x-y}t\bigg) \frac{t^{4\,m}dydt}{{t}^{n-1}},
$$
where $\varphi$ is a smooth positive function with $\varphi=1$ on the unit ball $B(0,1)$, supported in the ball $B(0,2)$. To justify the calculations, for $0<r<R/10<\infty$, let $\psi_{r,R}(t)=\zeta(t/r)(1-\zeta(t/R))$ where $\zeta$ is a smooth function that satisfies $0 \le \zeta \le 1$,  $\zeta(t)=0$ if $t\le 1/2$ and $\zeta(t)=1$ if $t\ge 2$ and set
 $$
I_{r,R}(x)= \iint  |\nabla_{y,t}u_m (y,t)|^2 \varphi^2\bigg( \frac{x-y}t\bigg) \psi_{r,R}^2(t) \frac{t^{4\,m}dydt}{{t}^{n-1}}.
$$

Let $B$ be the $(n+1)\times (n+1)$ block matrix with $A$ being one block and 1 the other one. By ellipticity $I_{r,R}(x) \le C(\lambda) \Re \calI_{r,R}(x)$ with
$$
\calI_{r,R}(x)= \iint  B(y)\nabla_{y,t}u_m \cdot \overline{ \nabla_{y,t}u_m}\,  \varphi^2\bigg( \frac{x-y}t\bigg)  \psi_{r,R}^2(t) \frac{t^{4\,m}dydt}{{t}^{n-1}}.
$$
 Next, we write
\begin{multline*}
\calI_{r,R}(x)
=
\iint  B(y)\nabla_{y,t}u_m\cdot  \overline{ \nabla_{y,t}(u_m-v_m)} \, \varphi^2\bigg( \frac{x-y}t\bigg) \psi_{r,R}^2(t)  \frac{t^{4\,m}dydt}{{t}^{n-1}}
 \\
 + \iint  B(y)\nabla_{y,t}u_m \cdot  \overline{ \nabla_{y,t}v_m}\,  \varphi^2\bigg( \frac{x-y}t\bigg) \psi_{r,R}^2(t) \frac{t^{4\,m}dydt}{{t}^{n-1}}
= \calI_{r,R}^1(x) + \calI_{r,R}^2(x).
\end{multline*}
In the last integral, distribute  the product $\varphi\psi$ on each gradient term  and  use Young's inequality with  $\ep$ to obtain a bound
$$
 \|B\|_\infty \ep I_{r,R}(x)
 +
 C\ep^{-1} \iint_{|x-y| <  2t}  |\nabla_{y,t}v_m|^2  \frac{t^{4\,m}dydt}{{t}^{n-1}}.
$$
Using that
$$
t^{2m}(t\nabla_{y,t}v_{m})
=
t\nabla_{y,t}(t^{2m}v_{m})-2m t^{2m}v_{m}  \vec{e}
$$
we can  obtain
\begin{multline*}
\calI_{r,R}^2(x)
\le
 \|B\|_\infty \ep I_{r,R}(x)
 \\
 +
 C\ep^{-1} m \iint_{|x-y| <  2t}  |t^{2m} v_m|^2  \frac{dydt}{{t}^{n+1}}
  +
 C\ep^{-1} \iint_{|x-y| <  2t}  |\nabla_{y,t}(t^{2m }v_m)|^2  \frac{dydt}{{t}^{n-1}}.
\end{multline*}
Note that the first term can be hidden if $\ep$ is small enough independently of $r,R,x$.

For $\calI_{r,R}^1(x)$ we integrate by parts using the equation satisfied by
$u_m$ to obtain
$$
\calI_{r,R}^1(x)=-\iint  B(y)\nabla_{y,t}u_m \cdot  \nabla_{y,t}\bigg\{\frac{t^{4m}}{t^{n-1}}\varphi^2\bigg( \frac{x-y}t\bigg) \psi_{r,R}^2(t) \bigg\} \overline{ (u_m-v_m)} dydt.
$$
Note that
$$
\nabla_{y,t}\bigg\{\frac{t^{4m}}{t^{n-1}}\varphi^2\bigg( \frac{x-y}t\bigg) \psi_{r,R}^2(t) \bigg\}
=
\frac{t^{2m}}{t^{(n-1)/2}}\varphi\bigg( \frac{x-y}t\bigg) \psi_{r,R}(t)  \frac{\theta(y,t)t^{2m}}{t^{(n+1)/2}}
$$
where $\theta\colon \RR^{n+1}_+ \to \RR^{n+1}$ is a  function with support
in the cone defined by $|x-y|\le 2t$ and is bounded independently of $x, r ,R$. Hence, another application of Young's inequality with
$\ep$ yields a bound
$$
\calI_{r,R}^1(x)\le
\|B\|_\infty \ep I_{r,R}(x) + C\ep^{-1} \iint_{|x-y| <  2t}  |u_m-v_m|^2  \frac
{t^{4m}dydt}{{t\, }^{n+1}}.$$
Again, the first term can be hidden if $\ep$ is small enough independently of $r,R,x$.
Gathering the obtained estimates we conclude that
\begin{multline*}
I_{r,R}(x) \le C(n,\lambda, \Lambda) \bigg(
m \iint_{|x-y| <  2t}  |t^{2m} v_m|^2  \frac{dydt}{{t}^{n+1}}
+
\iint_{|x-y| <  2t}  |\nabla_{y,t}(t^{2m }v_m)|^2  \frac{dydt}{{t}^{n-1}}
\\
+
\iint_{|x-y| <  2t}  |t^{2m}(u_m-v_m)|^2  \frac{dydt}{t^{n+1}}\bigg).
\end{multline*}
Letting $r\downarrow 0$ and $R\uparrow \infty$, one obtains the desired estimate.


\begin{thebibliography}{AHLMcT} 


\bibitem[Aus]{A} P.~Auscher, {\em On necessary and sufficient
conditions for $L^p$ estimates of Riesz transform associated elliptic
operators on $\RR^n$ and related estimates},  Mem. Amer. Math. Soc.  \textbf{186}  (2007),  no. 871.

\bibitem[AHLMcT]{AHLMcT} P.~Auscher, S.~Hofmann, M.~Lacey, A.~McIntosh \& Ph.~Tchamitchian, {\em The solution of the Kato square root
problem for second order elliptic operators on $\mathbb{R}\sp n$},  Ann. of Math. (2) {\bf 156} (2002), no. 2, 633--654.

\bibitem[AM]{AM1} P.~Auscher \& J.M.~Martell, {\em Weighted norm inequalities, off-diagonal estimates and elliptic operators. Part I: General operator theory and weights}, Adv. Math. \textbf{212} (2007), no. 1, 225--276.

\bibitem[Be1]{B1} F.~Bernicot, {\em Use of abstract Hardy spaces, real interpolation and applications to bilinear operators},  Math. Z.  \textbf{265}  (2010),  no. 2, 365--400.

\bibitem[Be2]{B2} F.~Bernicot, {\em Use of Hardy spaces and interpolation}, C. R. Math. Acad. Sci. Paris  \textbf{346}  (2008),  no. 13-14, 745--748.

\bibitem[BZ]{BZ} F.~Bernicot \& J.~Zhao, {\em New abstract Hardy spaces}, J. Funct. Anal.  \textbf{255}  (2008),  no. 7, 1761--1796.


\bibitem[CMS]{CMS} R.~Coifman, Y.~Meyer \& E.M.~Stein, {\em  Some new function spaces and their applications to harmonic analysis}, J. Funct. Anal.  \textbf{62}  (1985),  no. 2, 304--335.

\bibitem[CMP1]{CMP1} D.~Cruz-Uribe, J.~M. Martell \& C.~P\'erez, {\em  Extensions of Rubio de Francia's extrapolation theorem}, Collect. Math.  Vol. Extra (2006), 195--231.

\bibitem[CMP2]{CMP2} D.~Cruz-Uribe, J.~M. Martell \& C.~P\'erez, {\em  Weights, extrapolation and the theory of Rubio de Francia}, Birkh\"auser, Basel, 2011.

\bibitem[DJK]{DJK} B.~Dahlberg,  D.~Jerison, \& C.~Kenig, {\em  Area integral estimates for elliptic differential operators with nonsmooth coefficients}, Ark. Mat. \textbf{22} (1984), no. 1, 97--108.


\bibitem[DV]{DV} O.~Dragi\v{c}evi\'c \& A.~Volberg, {\em Bilinear embedding theorem for elliptic differential operators in divergence form with real coefficients}, to appear in J. London Math. Soc..


\bibitem[FS]{FS} C.~Fefferman \& E.M.~Stein, {\em $H^p$ spaces of several variables}, Acta Math. {\bf 129} (1972), 137--193.

\bibitem[Fre]{Fr} J.~Freshe, {\em An irregular complex valued solution to a scalar uniformly elliptic equation}, Calc. Var. Partial Differential Equations  \textbf{33}  (2008),  no. 3, 263--266.

\bibitem[Gar]{Gar} J.~Garc{\'\i}a-Cuerva, {\em An extrapolation theorem in the theory of $A_p$-weights}, Proc. Amer. Math. Soc. \textbf{87} (1983), 422--426.

\bibitem[Gra]{Gra} L. Grafakos, {\em Classical and modern Fourier analysis}, Pearson Education, Inc., Upper Saddle River, NJ, 2004.

\bibitem[HTV]{HTV} E.~Harboure, J.-L.~Torrea \& B.~Viviani, {\em A vector-valued approach to tent spaces}, J. Analyse Math.  \textbf{56}  (1991), 125--140.

\bibitem[HM]{HM} S.~Hofmann \& S.~Mayboroda, {\em Hardy and BMO spaces associated to divergence form elliptic operators}, Math. Ann.  \textbf{344}  (2009),  no. 1, 37--116.

\bibitem[HMMc]{HMMc} S.~Hofmann, S.~Mayboroda \& A.~McIntosh, {\em Second order elliptic operatos with complex bounded measurable coefficnets in  $L^p$, Sobolev and Hardy spaces}, Preprint 2010.

\bibitem[JK]{JK} D.~Jerison \& C.~Kenig,  {\em The Dirichlet problem in nonsmooth domains}, Ann. of Math. (2) {\bf 113}  (1981), no. 2, 367--382.

\bibitem[LeM] {LeM} C.~Le Merdy, {\em On square functions associated to sectorial operators}, Bull. Soc. Math. France {\bf 132} (2004), no. 1,  137--156.

\bibitem[Rub]{Rub} J.L.~Rubio de Francia, {\em Factorization theory and $A_p$ weights}, Amer. J. Math. \textbf{106} (1984), 533--547.


\bibitem[St]{St} E.M.~Stein, {\em Singular integrals and differentiability properties of functions}, Princeton Mathematical Series, No. 30 Princeton University Press, Princeton, N.J. 1970.


\end{thebibliography}
\end{document}